\title{\bf On the tail of the branching random walk local time}
\author{{\bf Omer Angel, Tom Hutchcroft, and Antal J\'arai}}
\crefname{theorem}{Theorem}{Theorems}
\crefname{thm}{Theorem}{Theorems}
\crefname{lemma}{Lemma}{Lemmas}
\crefname{lem}{Lemma}{Lemmas}
\crefname{remark}{Remark}{Remarks}
\crefname{prop}{Proposition}{Propositions}
\crefname{defn}{Definition}{Definitions}
\crefname{corollary}{Corollary}{Corollaries}
\crefname{conjecture}{Conjecture}{Conjectures}
\crefname{question}{Question}{Questions}
\crefname{chapter}{Chapter}{Chapters}
\crefname{section}{Section}{Sections}
\crefname{figure}{Figure}{Figures}
\theoremstyle{plain}
\newtheorem{thm}{Theorem}[section]
\newtheorem{lemma}[thm]{Lemma}
\newtheorem{prop}[thm]{Proposition}
\theoremstyle{definition}
\theoremstyle{remark}
\newtheorem{remark}[thm]{Remark}
\numberwithin{equation}{section}
\renewcommand{\P}{\mathbb P}
\newcommand{\E}{\mathbb E}
\newcommand{\R}{\mathbb R}
\newcommand{\Z}{\mathbb Z}
\newcommand{\bv}{\mathbf v}
\newcommand{\bx}{\mathbf x}
\newcommand{\bD}{\mathbf{D}}
\newcommand{\bG}{\mathbf{G}}
\newcommand{\eps}{\varepsilon}
\newcommand{\cH}{\mathcal H}
\newcommand{\cS}{\mathcal{S}}
\newcommand{\cT}{\mathcal T}
\renewenvironment{abstract}
 {\par\noindent\textbf{\abstractname.}\ \ignorespaces}
 {\par\medskip}
\begin{document}

\maketitle

\begin{abstract}
  Consider a critical branching random walk on $\Z^d$, $d\geq 1$, started with a single particle at the origin, and let $L(x)$ be the total number of particles that ever visit a vertex $x$.
  We study the tail of $L(x)$ under suitable conditions on the offspring distribution.
  In particular, our results hold if the offspring distribution has an exponential moment.
\end{abstract}

\section{Introduction}

In this paper we study the tail of the number of times a critical branching random walk on $\Z^d$ returns to the origin.  The result is most interesting in the \emph{upper-critical dimension} $d=4$, where we find that the local time has a stretched-exponential tail.

\begin{thm} 
\label{thm:main}
Let $d \geq 1$, let $( B_n )_{n\geq0}$ be a branching random walk on $\Z^d$ whose offspring distribution $\mu$ is critical, non-trivial, and sub-exponential, started with a single particle at the origin, and let $L(0)$ be the total number of particles that visit the origin. Then
\[
\P_{\mu,0}\left(L(0) \geq n\right) =
\begin{cases}
 \Theta\bigl(n^{-2/(4-d)}\bigr) & \qquad d<4\\
\exp\left[ -\Theta (\sqrt{n})\right] & \qquad d=4\\
\exp\left[ -\Theta(n)\right] & \qquad d > 4
\end{cases}
\]
for every $n\geq 1$.
\end{thm}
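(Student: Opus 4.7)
The approach I would take is the moment method, built on the many-to-$k$ formula for critical branching random walks, which represents
\[
\E\bigl[L(0)^k\bigr] = \sum_{\tau \in \mathcal{T}_k^{*}} w_\mu(\tau)\, \Phi(\tau),
\]
with $\tau$ ranging over rooted ordered trees with $k$ labelled leaves (the ``shape''); $w_\mu(\tau)$ a product of factorial moments of $\mu$ at the internal vertices of $\tau$, bounded by $C^{|\tau|}$ under the sub-exponential hypothesis; and $\Phi(\tau)$ a spatial weight equal to a product of random-walk Green's functions $G(x,y) = \sum_n p_n(x,y)$ along the edges of $\tau$ with all leaves pinned at the origin. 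Everything then reduces to dimension-dependent estimates on $\Phi(\tau)$, which drive the three regimes.

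\textbf{Upper bounds.} For $d \geq 5$, the convolution sums $\sum_y G(x,y)^j$ converge for every $j \geq 2$ (since $j(d-2) > d$), and a direct induction on shape complexity gives $\E[L(0)^k] \leq C^k k!$; hence $\E e^{\lambda L(0)} < \infty$ for small $\lambda > 0$, and $\P(L(0)\geq n) \leq e^{-cn}$ by Markov. For $d = 4$ the same convolutions are finite but each internal vertex of $\tau$ carries a logarithmic correction; tracking these through the induction and summing over Catalan-many shapes is expected to give the sharper bound $\E[L(0)^k]\leq C^k (k!)^2$, and Markov optimised at $k\asymp\sqrt n$ then yields $e^{-c\sqrt n}$. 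For $d \leq 3$, $\E[L(0)]=G(0,0)=\infty$ and moments are unavailable, so I would instead split
\[
\P\bigl(L(0)\geq n\bigr) \leq \P\bigl(|T|>N\bigr) + \P\bigl(L(0)\geq n,\ |T|\leq N\bigr)
\]
with $N = c\, n^{4/(4-d)}$: the first term is $\Theta(n^{-2/(4-d)})$ from the classical $\Theta(N^{-1/2})$ tail of the critical Galton--Watson total progeny, and the second by a truncated second-moment argument exploiting that the local time on a tree of size $N$ is typically of order $N^{1-d/4}$ (the ISE/Brownian-snake scaling), which is $\leq n$ for $N$ in this range.

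\textbf{Lower bounds and the main obstacle.} Matching lower bounds come from explicit constructions. For $d<4$, conditioning on $|T|\geq c\,n^{4/(4-d)}$ has probability $\asymp n^{-2/(4-d)}$ and, by Brownian-snake/ISE convergence, produces $L(0)\geq n$ with positive conditional probability. For $d\geq 5$, I would force a chain of $n$ consecutive descendants all at the origin (using $\mu(1)>0$ and a positive return probability of the walk in an even number of steps, or a degenerate binary configuration with one immediately extinct child when $\mu(1)=0$), at total cost $e^{-\Theta(n)}$. The hard case, and technical heart of the result, is $d=4$: the upper bound requires the precise $(k!)^2$ moment growth, which is sensitive to the combinatorial entropy of shapes competing against the logarithmic cost per branching; and the matching $e^{-\Theta(\sqrt n)}$ lower bound cannot come from tree-size conditioning alone (that gives only $N^{-1/2}$, and the typical local time on a size-$N$ tree in $d=4$ is only $O(1)$ up to logs) and likely requires a multi-scale construction that simultaneously fixes the tree size and forces many near-origin sub-excursions. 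Sharp local CLT and/or lace-expansion estimates for the random-walk Green's function in $\Z^4$ are likely to be the quantitative input that makes the $d=4$ bookkeeping close.
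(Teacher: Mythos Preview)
Your overall framework---diagrammatic moment expansion with offspring factorial moments bounded via sub-exponentiality, then dimension-dependent Green's function estimates---is exactly what the paper does, and the $d\geq 5$ case matches the paper essentially line for line.

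There are two places where your plan diverges from the paper, one minor and one substantive.

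\textbf{Low dimensions.} First, a factual slip: $G(0,0)<\infty$ when $d=3$ (the walk is transient), so $\E[L(0)]<\infty$; it is the \emph{higher} moments that diverge (the second moment involves $\sum_y G(0,y)^3$, which is logarithmically divergent in $d=3$). Your tree-size split $|T|\lessgtr N$ is close in spirit to the paper's, but the paper conditions on survival time $\{\partial T_r\neq\emptyset\}$ rather than total progeny, and uses \emph{truncated} moments $\E[L_r(0)^d]$ (second for $d\le 2$, third for $d=3$) computed directly from the diagram expansion with truncated Green's functions $\tilde\bG_r$. This avoids any appeal to ISE/snake convergence: both the upper bound (Markov on $L_r(0)^d$ plus Kolmogorov's $\P(\partial T_r\neq\emptyset)\asymp r^{-1}$, optimised in $r$) and the lower bound (Paley--Zygmund on $\sum_{\ell=r}^{2r} B_\ell(0)$) are entirely elementary.

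\textbf{The $d=4$ lower bound.} This is where you are missing the key idea. You correctly anticipate $\E[L(0)^k]\preceq C^k(k!)^2$ for the upper bound, but you overestimate the difficulty of the matching lower bound by looking for a pathwise multi-scale construction of the event $\{L(0)\ge n\}$. The paper instead proves a \emph{two-sided} moment estimate
\[
c^k\, k!\,k^{k-1}\ \leq\ \E_{\mu,0}[L(0)^k]\ \leq\ C^k\, k!\,k^{k-1},
\]
and then the tail lower bound drops out of Paley--Zygmund applied to $L(0)^k$: one gets $\P\bigl(L(0)^k\ge\tfrac12\E[L(0)^k]\bigr)\ge\tfrac14\,\E[L(0)^k]^2/\E[L(0)^{2k}]\ge c_0^k$, hence $\P(L(0)\ge c_1 k^2)\ge c_0^k$ and the $e^{-\Theta(\sqrt n)}$ lower bound. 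The only ``construction'' needed is for the \emph{moment} lower bound, and it lives at the diagram level rather than the sample-path level: one takes the $k$-skeleton whose underlying tree is a complete binary tree of depth $\log_2 k$ (with an extra root edge), and lower-bounds its diagram by counting assignments of dyadic scales to the internal vertices that decrease along branches. This combinatorics produces the factor $k^{k-1}$; the $k!$ comes from leaf-labellings. No lace expansion or sharp local CLT is needed; standard Gaussian heat-kernel bounds suffice throughout.
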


Here, we say that the offspring distribution $\mu$ is \textbf{critical} if it has mean $1$, \textbf{non-trivial} if $\mu(1)<1$, and \textbf{sub-exponential} if there exist positive constants $C$ and $c$ such that $\mu(n) \leq C e^{-cn}$ for every $n\geq 1$. We use both ``$f(n)=\Theta(g(n))$ for every $n\geq 1$'' and ``$f(n) \asymp g(n)$ for every $n\geq 1$'' to mean that there exist positive constants $c,C$ depending only on the offspring distribution $\mu$ and the dimension $d$ such that
$cg(n) \leq f(n) \leq C g(n)$ for every $n\geq 1$. Similar meaning applies to the symbols $\preceq$ and $\succeq$, so that, for example, ``$f_n(x) \preceq g_n(x)$ for every $n\geq 1$ and $x\in \Z^d$'' means that there exists a positive  constant $C$ depending only on the offspring distribution $\mu$ and the dimension $d$ such that $f_n(x) \leq C g_n(x)$ for every $n \geq 1$ and $x \in \Z^d$.

Our work is motivated in part by our hope to understand the analogous questions for the \emph{Abelian sandpile model} \cite{Dhar90,MajDhar92}. These questions remain open even in the high-dimensional case, in which other aspects of the model are now fairly well-understood \cite{JarRed08,bhupatiraju2016inequalities,hutchcroft2018universality}.

There is an extensive literature on critical branching random walk on $\Z^d$, with works particularly relevant to the present paper including \cite{BeCu12,MR3395472,MR3480967,zhu2017critical,MR3962482,zhu2016criticalI,zhu2016criticalII,MR2678898,MR2778804}. 
In light of this extensive literature, we were surprised to find that the tail of the local time had not previously been studied. 
The basic methods that we use (inductive analysis of moments via diagrammatic sums) are well-known to experts, but we have included a detailed exposition so that this paper could be used as an introduction to these techniques.


We also prove the following off-diagonal version of \cref{thm:main}. 
We use the notation \mbox{$\langle x \rangle = 2 \vee d(0,x)$}, where $d(0,x)$ denotes the graph distance between $0$ and $x$, to avoid dividing by zero.

\begin{thm} 
\label{thm:main_offdiagonal}
Let $d \geq 1$, let $(B_n)_{n\geq0}$ be a branching random walk on $\Z^d$ whose offspring distribution is critical, non-trivial, and sub-exponential, started with a single particle at the origin, and let $L(x)$ be the total number of particles that visit $x$. Then
\[
\P_{\mu,0}(L(x) \geq n) \asymp
\begin{cases}
\min\left\{n^{-2/(4-d)}, \langle x \rangle^{-2} \right\}
  & \qquad d<4\\[10pt]
\exp\bigg[ -\Theta \!\left(\min\left\{\sqrt{n}, \frac{n}{\log \langle x \rangle}\right\}\right)\bigg] \langle x \rangle^{-2} \log^{-1}\langle x \rangle & \qquad d=4\\[10pt]
\exp\Big[ -\Theta(n)\Big] \langle x \rangle^{-d+2} & \qquad d > 4
\end{cases}
\]
for every $n \geq 1$ and $x \in \Z^d$.
\end{thm}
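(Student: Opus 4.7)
The plan is to prove \cref{thm:main_offdiagonal} via the approximate factorization
\[
\P_{\mu,0}(L(x)\geq n)\asymp q(x)\cdot\P_{\mu,0}(L(0)\geq n),
\]
where $q(x):=\P_{\mu,0}(L(x)\geq 1)$ in dimensions $d\geq 4$, and a suitably truncated analogue in $d<4$ where $\E L(x)^2$ diverges. The second factor is supplied directly by \cref{thm:main}, so the task reduces to determining $q(x)$ and proving the factorization.

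The first step is to compute $q(x)$. The first moment is $\E L(x)=G(x)$, the simple random walk Green's function on $\Z^d$, and decomposing $L(x)^2=\sum_{v,w}\mathbf{1}(B_v=x=B_w)$ over the most recent common ancestor of $v$ and $w$ gives
\[
\E L(x)^2 \asymp G(x)+\sum_z G(z)\,G(x-z)^2.
\]
Dimensional analysis of the convolution yields $\E L(x)^2\asymp G(x)$ in $d>4$, $\E L(x)^2\asymp G(x)\log\langle x\rangle$ in $d=4$, and divergence in $d\leq 3$; in the latter case one must work with the truncated local time $L_N(x)=\#\{v:B_v=x,\ |v|\leq N\}$ with generation cutoff $N\asymp n^{2/(4-d)}\wedge\langle x\rangle^2$. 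Paley--Zygmund (equivalently, Cauchy--Schwarz) then delivers the matching lower bound on $q(x)$ in every dimension; the upper bound is immediate in $d>4$ from $q(x)\leq \E L(x)$, while in $d=4$ the requisite logarithmic saving over $G(x)$ requires a further argument, for instance a spine decomposition showing $\E[L(x)\mid L(x)\geq 1]\asymp \log\langle x\rangle$.

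The second step is to show $\P(L(x)\geq n\mid L(x)\geq 1)\asymp \P(L(0)\geq n)$. For the lower bound one uses a spine construction: on $\{L(x)\geq 1\}$ select a particle $v^*$ of minimal generation with $B_{v^*}=x$; by the branching property and translation invariance, the subtree rooted at $v^*$ is an independent BRW started at $x$ whose local time at $x$ is distributed exactly as $L(0)$, and applying the lower bound of \cref{thm:main} to this subtree gives the result. For the upper bound one estimates the general moments $\E L(x)^k$ via the tree-diagrammatic expansion, decomposing each diagram at its first branch point so that the trunk summed over branch positions contributes $\asymp q(x)$ and the two remaining subtrees contribute on-diagonal $k$-th moments from the branch; one then argues inductively that the dominant contribution factorizes as $q(x)\cdot\E L(0)^k$, and applies Markov's inequality at the optimal order $k$ already identified in the proof of \cref{thm:main}.

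The main obstacle will be matching the sharp prefactor in the upper critical dimension $d=4$: there the diagrammatic sums are only marginally convergent and each order of the moment expansion picks up logarithmic corrections that must be tracked carefully and shown to combine into the factor $\langle x\rangle^{-2}\log^{-1}\langle x\rangle$, rather than inflating the bound with uncontrolled powers of $\log\langle x\rangle$ at every order. A secondary technical point is the subcritical range $d\leq 3$, where the generation cutoff $N$ must be tuned jointly against $n$ and $x$ to interpolate between the regimes $n\lesssim \langle x\rangle^{4-d}$ (tail governed by the reach probability) and $n\gg \langle x\rangle^{4-d}$ (tail governed by the on-diagonal behavior), reflecting the $\min\{n^{-2/(4-d)},\langle x\rangle^{-2}\}$ form of the statement.
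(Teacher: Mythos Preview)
Your factorization $\P_{\mu,0}(L(x)\geq n\mid L(x)\geq 1)\asymp \P_{\mu,0}(L(0)\geq n)$ is \emph{false} in $d=4$, and this is not merely a prefactor issue. Look at the statement: the exponent is $\Theta\bigl(\min\{\sqrt{n},\,n/\log\langle x\rangle\}\bigr)$, which depends on $x$. In the regime $n\lesssim(\log\langle x\rangle)^2$ the conditional tail is $\exp[-\Theta(n/\log\langle x\rangle)]$, which is \emph{much larger} than $\P(L(0)\geq n)\asymp\exp[-\Theta(\sqrt n)]$. Your spine lower bound only gives the latter, so it is not sharp; and if your moment upper bound really produced $q(x)\cdot\E L(0)^k$, it would contradict the theorem in this regime. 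The underlying reason is the one you almost stated yourself: conditionally on $L(x)\geq 1$, the local time $L(x)$ is \emph{not} $O(1)$ but rather of order $\log\langle x\rangle$ (indeed, by Le~Gall--Merle it is asymptotically exponential with that mean), so the conditional tail has an $x$-dependent rate.

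The paper does not attempt a factorization. Instead it proves the two-sided moment estimate
\[
\E_{\mu,0}[L(x)^k]\asymp C^k k!\,[\,k+\log\langle x\rangle\,]^{k-1}\langle x\rangle^{-2}
\]
directly, tracking the $x$-dependence through every order $k$. The upper bound comes from a recursive inequality for the maximal $k$-skeleton diagram (\cref{lem:recursion}) combined with a four-dimensional ``bubble'' computation (\cref{lem:4dbubble}) showing that each iteration of the recursion multiplies the bound by $C[k+\log\langle x\rangle]$; the lower bound comes from an explicit binary-tree diagram. The tail is then read off via exponential Markov and Paley--Zygmund applied to $L(x)^k$ with $k\asymp\min\{\sqrt n,\,n/\log\langle x\rangle\}$, which is exactly where the $x$-dependent crossover in the exponent comes from. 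Your first-branch-point decomposition will not reproduce the factor $[k+\log\langle x\rangle]^{k-1}$: the logarithms accumulate at \emph{every} internal vertex of the skeleton, not just along the trunk.

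For $d>4$ your plan is essentially the paper's (spine for the lower bound, moment bounds plus Markov for the upper bound), and for $d<4$ your truncation strategy is close to the paper's as well, though the paper optimizes a single cutoff $r$ in $\P(L(x)\geq n)\leq n^{-p}\E L_r(x)^p+\P(\partial T_r\neq\emptyset)$ with $p=2$ (or $p=3$ when $d=3$) rather than phrasing it as a factorization.
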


The proof of \cref{thm:main_offdiagonal} relies on the asymptotics of the hitting probability 
\begin{equation}
\label{eq:intro_hitting}
\P_{\mu,0}\!\left(L(x) \geq 1\right) \asymp \begin{cases}
\langle x \rangle^{-2} & d \leq 3\\
\langle x \rangle^{-2} \log^{-1} \langle x \rangle & d =4\\
\langle x \rangle^{-d+2} & d \geq 5
\end{cases} \qquad \text{ for every $x \in \Z^d$,}
\end{equation}
which were computed by Le Gall and Lin \cite{MR3395472,MR3480967} for $d \neq 4$ and by Zhu \cite{MR3962482,zhu2017critical} for $d=4$.

\begin{remark}
It is well-known  that a critical branching random walk \emph{conditioned to survive forever} visits the origin infinitely often if and only if $d \leq 4$ \cite{BeCu12}. This is closely related to the fact that the conditional distribution of $L(x)$ given $L(x)>0$ is tight as $x \to \infty$ if and only if $d \geq 5$. 
\end{remark}

\begin{remark}
In the context of super-Brownian motion (which is a continuum analogue of critical branching random walk), Le Gall and Merle \cite{MR2266716} studied the conditional distribution of the occupation measure $\mathcal{Z}(B_1(x))$ of the unit ball $B_1(x)$ for large $x$, given that this measure is positive. Their results are closely related to \cref{thm:main_offdiagonal}. 
In particular, they show that if $d=4$ then  the conditional distribution of the normalized occupation measure $\mathcal{Z}(B_1(x))/\log |x|$ given that it is positive converges to an exponential distribution as $x\to \infty$. 
It would be interesting to establish a version of their theorem in the discrete case.
\end{remark}

\begin{remark}
  It is natural to consider the distribution of $L(x)$ for branching random walks on graphs other than $\Z^d$.
It should be straightforward to adapt the proof of \cref{thm:main} to bounded degree graphs that are $d$-Ahlfors regular and satisfy Gaussian heat kernel estimates. See e.g.\ \cite{KumagaiBook,Woess} for background on these notions. We restrict attention to the usual nearest-neighbour random walk on $\Z^d$ for clarity of exposition.
\end{remark}




\section{Background}
\subsection{Branching random walk}

Let us now very briefly define the model, referring the reader to e.g.\ \cite{LP:book,zhu2016criticalI} for more details on branching processes and Galton-Watson trees.
Given $d\geq 1$, an offspring distribution $\mu$ (i.e., a probability measure on $\{0,1,\ldots\}$), and a point $x \in \Z^d$ we write $\P_{\mu,x}$ for the law of a branching random walk $(B_n)_{n\geq 0}$ on $\Z^d$ with offspring distribution $\mu$ started with a single particle at $x$. 
More precisely, $(B_n)_{n\geq 0}$ is a Markov chain whose state space is the set of finitely supported functions $\Z^d \to \{0,1\ldots\}$, where $B_0(y) = \mathbbm{1}(y=x)$ and where we think of $B_n(y)$ as the number of particles occupying the point $y$ at generation $n$. At each time step, each particle splits into a random number of offspring particles independently at random according to the offspring distribution $\mu$, and each offspring particle immediately performs an independent simple random walk step.
We define the \textbf{local time} $L_n(x) = \sum_{m =0}^n B_m(x)$ to be the total number of particles that occupy the site $x$ up to time $n$, and similarly define the limit $L(x) = \sum_{m=0}^\infty B_m(x)$.

Alternatively, we may construct branching random walk by first taking a Galton-Watson tree $T$ with offspring distribution $\mu$, which encodes the genealogy of the particles of the branching random walk,  letting $X:V(T) \to \Z^d$ be a uniform random graph homomorphism from $T$ into $\Z^d$ mapping the root to $x$ (i.e., a simple random walk on $\Z^d$ started at $x$ and indexed by $T$), and letting $B_n(y) = \#\{v \in \partial T_n : X(v) =y \}$ for every $n\geq 0$ and $y\in \Z^d$. We write $\partial T_r$ for the set of vertices of $T$ at distance exactly $r$ from the root. It is easily seen that if $\mu$ is critical then 
$\E_{\mu,0}[\# \partial T_r]= 1$ for every $r\geq 0$. Moreover, if $\mu$ is critical, non-trivial, and has finite variance $\sigma^2$, then \emph{Kolmogorov's estimate} states that
\begin{equation}
\P_{\mu,0}(\partial T_r \neq \emptyset) \sim \frac{2}{\sigma^2r} \qquad \text{ as $r \to \infty$}.
\end{equation}
This estimate was proven by Kolmogorov under a third moment assumption \cite{KolmogorovBiology}, and in full generality by Kesten, Ney, and Spitzer \cite{MR0207052}; see \cite{LPP95} for a modern proof.

\subsection{Random walk estimates}

We now briefly recall the relevant background concerning random walk on $\mathbb{Z}^d$, referring the reader to e.g. \cite{Woess,KumagaiBook} for further background. Let $p_n(u,v)$ denote the $n$-step transition probabilities of simple random walk on $\mathbb{Z}^d$. The \textbf{Gaussian heat kernel estimates} state that 
\begin{equation}
\label{eq:GHKE}
 p_n(x,y) +  p_{n+1}(x,y) \asymp n^{-d/2}\exp\left[-\Theta\Bigl( d(x,y)^2/n \Bigr) \right]
\end{equation}
for every $x,y \in \Z^d$ and $n \geq 1$, where $d(x,y)$ denotes the graph distance between $x$ and $y$.
(Note that the constants in the $\Theta$ notation may differ for the lower and upper bounds.)
Note that $p_n(x,y)=0$ if $n$ has a different parity to $d(x,y)$.
In particular, we have that
\begin{equation}
\label{eq:local_limit}
 p_{2n}(x,x) \asymp n^{-d/2}
\end{equation}
for every $x\in \Z^d$ and $n\geq 1$. 
If $d \geq 3$, the Gaussian heat kernel estimates can be integrated over time to obtain that the Green's function $\bG(u,v)=\sum_{n\geq0} p_n(u,v)$ satisfies
\begin{equation}
\label{eq:Greens}
 \mathbf{G}(u,v) \asymp  d(u,v)^{-d+2}
\end{equation} 
for every $u,v \in \Z^d$.


\section{Diagrammatic expansion of moments}


In this section we discuss how the moments of the branching random walk local time may be expanded in terms of \emph{diagrammatic sums}, and then prove a recursive inequality that may be used to bound these sums. This basic methodology is well-known to experts, see e.g.\ \cite[eq. 6]{MR2266716} for an application to super-Brownian motion, and \cite{MR762034} for related techniques in percolation.

Recall that a \textbf{rooted plane tree} is a locally finite tree with a distinguished root vertex and a distinguished linear ordering of the children of each vertex; an isomorphism of trees is an isomorphism of rooted plane trees if it preserves this additional data. Note that a rooted plane tree cannot have any nontrivial automorphisms.  We may consider a Galton-Watson tree $T$ to be a rooted plane tree by picking a uniform random linear ordering of the children of every vertex.

Let $k\geq 0$. We define a \textbf{$k$-labelled rooted plane tree} to be a finite rooted plane tree $S$ with vertex set $V(S)$, together with a (not necessarily injective) labelling function $\ell:\{0,1,\ldots,k\} \to V(S)$ mapping $0$ to the root of $S$ such that every leaf of $S$ is labelled (i.e., is in the image of $\ell$).
Note that leaves of $S$ may have multiple labels, and that internal vertices may also have labels.
Given a $k$-labelled rooted plane tree $S$, we write $\partial V (S) = \ell(\{0,1,\ldots,k\})$ and $V^\circ(S)=V(S) \setminus \partial V(S)$ to denote the sets of labelled and unlabelled vertices of $S$. An isomorphism of rooted plane trees is an isomorphism of labelled rooted plane trees if it preserves the labelling.

We say that a $k$-labelled rooted plane tree is a \textbf{(labelled) $k$-skeleton} if every unlabelled vertex has at least two children.
In particular, up to isomorphism there is only one $0$-skeleton, which has one vertex labelled $0$ and no edges. Similarly, there are exactly two isomorphism classes of $1$-skeletons, which have one and two vertices respectively. 
For each $k\geq 0$, we let $\mathcal{S}_k$ be a set of isomorphism class representatives for the set of labelled $k$-skeletons and let $\cH_k$ be a set of isomorphism class respresentatives for the set of $k$-labelled rooted plane trees.


We will use the modified Green's function
\begin{align*}
  \tilde \bG(x,y) &= \sum_{k\geq 1} p_k(x,y) = \bG(x,y)-\mathbbm{1}(x=y), \\
  \intertext{and}
  \tilde \bG_n(x,y) &= \sum_{k= 1}^n p_k(x,y) = \bG_n(x,y)-\mathbbm{1}(x=y)
\end{align*}
for each $x,y \in \Z^d$ and $n\geq 1$.

For each $k\geq 0$, each $k$-labelled rooted plane tree $S$, and each $\mathbf{x} = (x_0,\ldots,x_k) \in (\Z^d)^{k+1}$ we write $\Lambda(\mathbf{x};S)=\Lambda(x_0,\ldots,x_k;S)$ for the set $\mathbf{y} = (y_u)_{u \in V(S)} \in (\Z^d)^{V(S)}$ such that $y_{\ell(i)}=x_i$ for every $0\leq i \leq k$.
(This set is empty if $\ell(i)=\ell(j)$ but $x_i\neq x_j$.)
When $S$ is a $k$-skeleton, we define the \textbf{$S$-diagram} to be the function $\mathbf{D}( \;\cdot\; ; S) : (\Z^d)^{k+1} \to [0,\infty]$ given by
\[
\mathbf{D}(\mathbf{x};S) = \bD(x_0,\ldots,x_k;S) =  \sum_{\mathbf{y} \in \Lambda(\mathbf{x};S)} \prod_{u \sim v} \tilde \bG(y_u,y_v),
\]
where the second product is over all unordered pairs of adjacent vertices in $S$.
In particular, if $S$ is the $0$-skeleton then $\mathbf{D}( \;\cdot\; ; S) \equiv 1$, while if $S$ is the $1$-skeleton with two vertices then $\mathbf{D}( x,y ; S) \equiv \tilde \bG(x,y)$.
Similarly, for each $k,n\geq 0$ and each $k$-skeleton $S$ we define the \textbf{truncated $S$-diagram} to be the function $\mathbf{D}_n( \;\cdot\; ; S) : (\Z^d)^{k+1} \to [0,\infty]$ given by
\[
\mathbf{D}_n(\mathbf{x};S) = \bD_n(x_0,\ldots,x_k;S) = \sum_{\mathbf{y} \in \Lambda(\mathbf{x};S)} \prod_{u \sim v} \tilde \bG_n(y_u,y_v),
\]
where, as before, the second product is over all unordered pairs of adjacent vertices in $S$.


Recall that  $\E_{\mu,x}$ denotes the law of a branching random walk $(B_n)_{n\geq 0}$ with offspring distribution $\mu$ started with a single particle at $x$. Recall also that we write $L_n(y) = \sum_{k=0}^n B_k(y)$ for the total number of particles that visit $y$ up to time $n$, and write $L(y)=\sum_{k=0}^\infty B_k(y)$ for the total number of particles that ever visit $y$. 
For each $k\geq 0$, we define $b_k$ to be the expectation of the binomial coefficient $\binom{\cdot}{k}$
 under the offspring distribution $\mu$, that is,
\[
b_k = \sum_{n= k}^\infty \binom{n}{k}\mu(n).
\]
In particular, $b_0=b_1=1$ when $\mu$ is critical,  $b_k<\infty$ if and only if $\mu$ has a finite $k$th moment, and $b_k>0$ if and only if $\sum_{n \geq k} \mu(n)>0$. In particular $b_2>0$ whenever $\mu$ is critical and non-trivial. 
For each vertex $v$ in a rooted plane tree $S$, we write $c(v)$ for the number of children of $v$.

\begin{prop}[Diagrammatic expansion of moments] 
\label{lem:moments_to_diagrams}
Let $\mu$ be critical and let $d\geq 1$. We have that
\begin{align*}\E_{\mu,x_0}\left[\prod_{i=1}^k L(x_i) \right]  &= \sum_{S \in \mathcal{S}_k} \mathbf{D}(x_0,x_1,\ldots,x_k;S)  \prod_{v \in V(S)} b_{c(u)}\\
\intertext{and}
\E_{\mu,x_0}\left[\prod_{i=1}^k L_n(x_i) \right] &\leq \sum_{S \in \mathcal{S}_k}  \mathbf{D}_n(x_0,x_1,\ldots,x_k;S) \prod_{v \in V(S)} b_{c(u)}
\end{align*} for every $n,k\geq 0$ and $x_0,\ldots,x_k \in \Z^d$. 
\end{prop}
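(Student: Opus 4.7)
The plan is to expand $\prod_{i=1}^k L(x_i)$ as a sum over $k$-tuples of vertices in the genealogical Galton--Watson tree $T$ and then classify each tuple by the isomorphism class of the skeleton obtained by reducing its spanning subtree. Writing $L(x_i)=\sum_{v\in V(T)}\mathbbm{1}(X(v)=x_i)$ and using linearity of expectation gives
\[
\E_{\mu,x_0}\prod_{i=1}^k L(x_i) = \sum_{(v_1,\ldots,v_k)\in V(T)^k} \E_{\mu,x_0}\prod_i \mathbbm{1}(X(v_i)=x_i).
\]
For each tuple, let $R(v_0,v_1,\ldots,v_k)$ (with $v_0=\rho$ the root) denote the labelled plane tree obtained from the spanning subtree of $\{v_0,\ldots,v_k\}$ in $T$ by labelling each $v_i$ with $i$ and then suppressing every unlabelled degree-two vertex; this is by construction an element of $\mathcal{S}_k$. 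Partitioning the outer sum by the isomorphism class of $R$ reduces the identity to showing, for each $S \in \mathcal{S}_k$, that the sum of $\E[\prod_i \mathbbm{1}(X(v_i)=x_i)]$ over tuples with $R\cong S$ equals $\bD(\mathbf{x};S)\prod_{v\in V(S)} b_{c(v)}$.

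For a fixed $S$ I would next parametrise the contributing tuples by the vector $\ell=(\ell_e)_{e \in E(S)}\in\{1,2,\ldots\}^{E(S)}$ of edge lengths of the corresponding paths in $T$. A tuple of type $(S,\ell)$ is in bijection with a plane-tree embedding $\phi$ (root-to-root, preserving the linear order of children) into $T$ of the \emph{expanded skeleton} $S[\ell]$ obtained from $S$ by replacing each edge $e$ by a path of length $\ell_e$; one recovers $v_i=\phi(\ell(i))$. By the branching property of $T$, the expected number of such embeddings factorises over the vertices of $S[\ell]$: at each vertex with $c$ children one selects $c$ of the $N\sim\mu$ children of its image in $T$ in the prescribed left-to-right order, contributing $\binom{N}{c}$ with mean $b_c$, after which the subtrees below are independent copies of $T$. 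Since $\mu$ is critical we have $b_1=1$, so the $\ell_e-1$ suppressed path-vertices on each edge contribute trivially, giving
\[
\E\bigl[\#\{\phi:S[\ell]\hookrightarrow T\}\bigr] \;=\; \prod_{v\in V(S)} b_{c(v)}
\]
independently of $\ell$.

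Conditional on any such embedding $\phi$, the positions $(X(\phi(u)))_{u\in V(S[\ell])}$ form a random walk indexed by $S[\ell]$, so summing out the positions of suppressed internal vertices yields
\[
\P\bigl(X(v_i)=x_i\ \forall i\ \bigm|\ \phi\bigr) \;=\; \sum_{\mathbf{y}\in \Lambda(\mathbf{x};S)} \prod_{e\in E(S)} p_{\ell_e}(y_{u_1(e)},y_{u_2(e)}),
\]
which depends only on $\ell$ and not on the particular embedding. Combining the two displays, summing over $\ell_e\ge 1$ and using $\sum_{\ell\ge 1} p_\ell=\tilde\bG$ on each edge produces exactly $\bD(\mathbf{x};S)\prod_v b_{c(v)}$, finishing the equality for $L$.

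For the truncated version, requiring each $v_i$ to have depth at most $n$ in $T$ imposes the \emph{joint} constraint $\sum_{e\in P_i}\ell_e\le n$ for every root-to-label path $P_i$ in $S$; relaxing to the weaker per-edge bound $\ell_e\le n$ replaces each $\tilde\bG$ by $\tilde\bG_n$ and yields the stated upper bound. The main technical point is the plane-tree embedding count: one must track the plane ordering carefully so that each branch point of $T$ contributes $b_c=\E[\binom{N}{c}]$ rather than $c!\, b_c$, and verify that the Markov/branching property of $T$ legitimately factorises the expected count as a product over vertices of $S[\ell]$.
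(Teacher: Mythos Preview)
Your proof is correct and follows essentially the same route as the paper's: both expand the product over tuples of tree vertices, compute the expected number of plane-tree embeddings of a fixed labelled tree via the branching property (yielding $\prod_v b_{c(v)}$, with the $b_1=1$ contributions from suppressed path vertices disappearing by criticality), and then sum over edge lengths to produce the $\tilde{\bG}$ factors. The only cosmetic difference is that the paper introduces the intermediate set $\mathcal{H}_k$ of all $k$-labelled rooted plane trees and then groups them by skeleton, whereas you directly parametrise $\mathcal{H}_k$ as pairs $(S,\ell)$ with $S\in\mathcal{S}_k$ and $\ell\in\{1,2,\ldots\}^{E(S)}$; the truncated inequality is obtained identically in both cases by relaxing the joint depth constraint $\sum_{e\in P_i}\ell_e\le n$ to the per-edge bound $\ell_e\le n$.
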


\begin{proof}
We first explain the appearance of the combinatorial term $\prod b_{c(u)}$ in the proposition.
Let $T$ be the genealogical tree of $B$, and let $X$ be the random embedding of $T$ into $\Z^d$.
Let $H$ be a $k$-labelled rooted plane tree. We say that a graph homomorphism $\phi$ from $H$ into the Galton-Watson tree $T$ is an \textbf{embedding} if it is injective, maps the root of $H$ to the root of $T$, and respects the plane structure of $H$ and $T$ in the sense that for every vertex $v$ of $H$ with children $u_1,\ldots,u_n$, the children $\phi(u_1),\ldots,\phi(u_n)$ of $\phi(v)$ in $T$ appear in the same linear order as $u_1,\ldots,u_n$ do in $H$. However, $T$ may have additional vertices not corresponding to any vertex in $H$. It is easily seen by induction on the height of $H$ that
\begin{equation}
\label{eq:Hembeddings}
\E_{\mu,x_0}\left[ \#\!\left\{\text{embeddings of $H$ into $T$}\right\} \right] = \prod_{u \in H} b_{c(u)}
\end{equation}
for every finite rooted plane tree $H$. (This equality holds even if $\mu$ is not critical.)

\medskip

We begin with the first, non-trucated formula.
Given a $k$-tuple of not necessarily distinct vertices $\bv = (v_1,\dots,v_k) \in V(T)^k$, let $H(\bv)$ be the $k$-labelled rooted plane tree spanned by the union of the geodesics between the root of $T$ and the vertices $v_1,\ldots,v_k$, with labelling function defined by setting $\ell(0)$ to be the root of $T$ and setting $\ell(i)=v_i$ for each $1 \leq i \leq k$.
We can write
\begin{align*}
\prod_{i=1}^k L(x_i) &= \#\Bigl\{\bv \in V(T)^k : X(v_i)=x_i \; \forall 1 \leq i \leq k\Bigr\}.
\\&= \sum_{H \in \cH_k} 
\#\Bigl\{\bv \in V(T)^k :  H(\bv) \cong H,\; X(v_i)=x_i \; \forall 1 \leq i \leq k\Bigr\}.
\end{align*}
 On the other hand, by definition of the embedding $X$ we have that
\begin{align*}
\E\left[\prod_{i=1}^k L(x_i) \bigg| T \right] 
&=
\sum_{H \in \cH_k}\#\Bigl\{(\bv \in V(T)^k :  H(\bv) \cong H \Bigr\} \sum_{\mathbf{y} \in \Lambda(\mathbf{x},H)}\prod_{u \sim v} p_1(y_u,y_v).
\\
&= \sum_{H \in \cH_k} \#\{\text{embeddings of $H$ into $T$}\} \sum_{\mathbf{y} \in \Lambda(\mathbf{x},H)} \prod_{u \sim v} p_1(y_u,y_v),
\end{align*}
where $p_1(\cdot,\cdot)$ denotes the one-step transition probabilities for simple random walk on $\Z^d$.
Taking expectations over $T$ and applying \eqref{eq:Hembeddings}, we obtain that
\begin{equation}
\label{eq:Hexpansion}
\E_{\mu,x_0}\left[\prod_{i=1}^k L(x_i)  \right] = \sum_{H \in \cH_k} \sum_{y \in \Lambda(x,H)}\prod_{u \in H} b_{c(u)} \prod_{v = c(u)} p_1(y_u,y_v).
\end{equation}

For each $H$ in $\cH_k$, let $S(H) \in \cS_k$ denote the $k$-skeleton obtained from $H$ by replacing each path whose interior vertices are unlabelled vertices of degree two by a single edge. Thus, for each $S\in \cS_k$, the set of $H \in \cH_k$ with $S(H) = S$ is equal to the set of $k$-labelled rooted plane trees that can be obtained from $S$ by replacing each edge with a path of arbitrary length. Since $\mu$ is critical and $b_1=1$, one may readily verify that
\[
\sum_{\substack{H \in \cH_k\\S(H)=S}} \sum_{y \in \Lambda(x,H)}\prod_{u \in H} b_{c(u)} \prod_{v = c(u)} p_1(y_u,y_v) = \sum_{y \in \Lambda(x,S)}\prod_{u \in V(S)} b_{c(u)} \prod_{v = c(u)} \tilde \bG(y_u,y_v)
\]
for every $S \in \cS_k$ and $x_0,x_1,\ldots,x_k \in \Z^d$.
The first claim follows from this together with \eqref{eq:Hexpansion}.

The proof in the truncated case is fairly similar, and we give only a very brief outline.
For each $n \geq 0$ and $k\geq 0$, let $\cH_{n,k} \subset \cH_k$ denote the set of $k$-labelled rooted plane trees with height at most $n$, and let $\cH_{n,k}'$ denote the set of $k$-labelled rooted plane trees in which each path whose interior vertices are unlabelled vertices of degree two has length at most $n$. Clearly $\cH_{n,k} \subset \cH_{n,k}'$. We have by similar reasoning to above that
\begin{align*}
\E\left[\prod_{i=1}^k L_n(x_i)\right] &= \sum_{H \in \cH_{n,k}} \sum_{y \in \Lambda(x,H)} \prod_{u \in H} b_{c(u)} \prod_{v=c(u)}p_1(y_u,y_v)\\
&\leq \sum_{H \in \cH_{n,k}'} \sum_{y \in \Lambda(x,H)} \prod_{u \in H} b_{c(u)} \prod_{v=c(u)}p_1(y_u,y_v)\\
&= \sum_{S \in \cS_k}  \sum_{y \in \Lambda(x,S)} \prod_{u \in V(S)} b_{c(u)} \prod_{v=c(u)}\tilde \bG_n (y_u,y_v)
\end{align*}
as claimed.
\end{proof}


We next state and prove a recursive inequality that allows us to bound the diagrammatic sums arising in \cref{lem:moments_to_diagrams}. 
For each $k\geq 0$, let $\cS_k'$ be the set of $k$-skeletons whose labelling function is injective.
We observe that for any tuple $\bx$, the maximum $\max_{S\in\cS'_k} \bD(\bx;S)$ is invariant to permuting the elements of $\bx$.
Indeed, $\bD(\bx;S)$ is invariant under applying the same permutation to both the entries of $\bx$ and the labels of $S$.
(If $0$ is not a fixed point of the permutation, this requires one to change the root of $S$.)
The symmetry of the random walk implies that such re-rooting also does not change $\bD$.
In light of this, for each $k\geq 1$ and $x\in \Z^d$, we define
\[
M_k(x) : = \max_{S \in \cS_k'} \bD(0,\ldots,0,x;S) = \max_{S \in \cS_k'} \bD(x,0,\ldots,0;S) = \max_{S \in \cS_k'} \bD(0,x,\ldots,x;S),
\]
where the equality of these three expressions follow from the symmetry noted above.
We could equivalently define $M_k(x)$ by maximizing $\bD(\bx;S)$ over all $S \in \cS_k'$ and all $\bx$ which are a permutation of $(0,\dots,0,x)$.
Similarly, we define the truncated version
\[
M_{k,n}(x) : = \max_{S \in \cS_k'} \bD_n(0,\ldots,0,x;S) = \max_{S \in \cS_k'} \bD_n(x,0,\ldots,0;S) = \max_{S \in \cS_k'} \bD_n(0,x,\ldots,x;S)
\]
for each $k\geq 0$ and $n\geq 0$.
Note that $M_1(x)=\tilde \bG(0,x)$ and $M_{1,n}(x)=\tilde \bG_n(0,x)$ for every $x\in \Z^d$ and $n\geq 0$.

\begin{lemma}[Recursive inequality for the maximal diagram]
  \label{lem:recursion}
  Let $d\geq 1$ and $k\geq 2$. Then
  \begin{equation}
    M_k(x) \leq  \left[1 \vee  \tilde\bG(0,0)^{-1}\right]
    \max_{0<r<k} \left\{ \sum_{y \in \Z^d} M_{r}(y) M_{k-r}(y) \tilde \bG(y,x) \right\}
    \label{eq:recursive_inequality}
  \end{equation}
  and
  \begin{equation}
    M_{k,n}(x) \leq \left[1 \vee  \tilde\bG_n(0,0)^{-1}\right]
    \max_{0<r<k}\left\{ \sum_{y \in \Z^d} M_{r,n}(y) M_{k-r,n}(y) \tilde \bG_n(y,x) \right\}.
    \label{eq:recursive_inequality_truncated}
  \end{equation}
\end{lemma}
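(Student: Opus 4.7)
The plan is to fix a skeleton $S \in \cS_k'$ achieving the maximum $M_k(x) = \bD(0,\ldots,0,x;S)$ and to decompose the diagram around a pivot vertex chosen according to the local structure of $S$ near $\ell(k)$. For a well-chosen pivot, summing over the value assigned to the pivot factors the diagram over the connected components of $S$ with the pivot removed; the aim is to group these components into two pieces $S_1^-$ and $S_2^-$ that are genuine skeletons in $\cS_r'$ and $\cS_{k-r}'$ for some $1 \le r \le k-1$, with the remaining edge contributing the factor $\tilde\bG(w,x)$ that appears on the right-hand side of \eqref{eq:recursive_inequality}.

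I would distinguish three cases. First, suppose $\ell(k)$ is a leaf whose parent $y^*$ is unlabelled. The skeleton condition forces $y^*$ to have at least two children in $S$, and being unlabelled it is not the root. I would split $S \setminus \{\ell(k)\}$ at $y^*$ by placing one remaining child subtree $T_{c_1}$ into $S_2^-$ and all other incident subtrees (including the parent subtree, re-rooted at $y^*$) into $S_1^-$, and give $y^*$ the new root label $0$ in each piece. The resulting $S_1^- \in \cS_r'$ and $S_2^- \in \cS_{k-r}'$ are valid skeletons with $1 \le r \le k-1$, since re-rooting preserves degrees, the new labelled root $y^*$ is exempt from the $\ge 2$-children condition, and each piece contains at least one labelled vertex besides $y^*$ (the original root of $S$ in $S_1^-$, a labelled leaf of $T_{c_1}$ in $S_2^-$). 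Summing over $w = y_{y^*}$ then yields
\[
\bD(0,\ldots,0,x;S) \;=\; \sum_{w \in \Z^d} \bD(w,0,\ldots,0;S_1^-)\,\bD(w,0,\ldots,0;S_2^-)\,\tilde\bG(w,x) \;\le\; \sum_{w} M_r(w)\,M_{k-r}(w)\,\tilde\bG(w,x),
\]
using the symmetry of $M$ recorded just before the lemma; no additional constant is needed.

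The two remaining cases require the factor $1 \vee \tilde\bG(0,0)^{-1}$. Second, if $\ell(k)$ is a leaf whose parent $y^*$ is labelled, then $S^- := S \setminus \{\ell(k)\}$ lies in $\cS_{k-1}'$ (labelled vertices have no child-count constraint), so $\bD(0,\ldots,0,x;S) = \bD(0,\ldots,0;S^-)\,\tilde\bG(0,x) \le M_{k-1}(0)\,\tilde\bG(0,x)$, which is covered by the $w=0$ term of the sum at $r = k-1$, equal to $M_{k-1}(0)\,\tilde\bG(0,0)\,\tilde\bG(0,x)$. Third, if $\ell(k)$ is internal in $S$, I would partition the edges at $\ell(k)$ into two non-empty groups to form sub-skeletons $S_1^- \in \cS_r'$, $S_2^- \in \cS_{k-r}'$ with $r, k-r \ge 1$ (each group reaches a labelled leaf), both rooted at $\ell(k)$ with root value $x$; since no edge is cut, $\bD(S) \le M_r(x)\,M_{k-r}(x)$, and the $w = x$ term of the sum equals $M_r(x)\,M_{k-r}(x)\,\tilde\bG(x,x) = M_r(x)\,M_{k-r}(x)\,\tilde\bG(0,0)$, which absorbs the bound via the factor $1 \vee \tilde\bG(0,0)^{-1}$.

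The truncated inequality \eqref{eq:recursive_inequality_truncated} follows from exactly the same argument with $\tilde\bG$ replaced by $\tilde\bG_n$ and $M_r$ by $M_{r,n}$ throughout. I expect the main obstacle to be the bookkeeping in the first case: one must carefully verify that the sub-skeletons $S_1^-, S_2^-$ produced by the split are indeed valid elements of $\cS_r'$ and $\cS_{k-r}'$, tracking how the root label and the original labels of $S$ redistribute under the re-rooting at $y^*$ and checking that the $\ge 2$-children condition on every remaining unlabelled vertex is preserved.
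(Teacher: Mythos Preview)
Your proposal is correct and takes essentially the same approach as the paper: the same three cases according to whether $\ell(k)$ is a leaf and whether its parent is labelled, with the two degenerate cases absorbed into the main sum via the $y=0$ and $y=x$ terms to produce the factor $1\vee\tilde\bG(0,0)^{-1}$. The only cosmetic difference is the precise grouping in the main case: the paper puts \emph{all} remaining children of the unlabelled parent $v$ into one piece and the tree above $v$ into the other (so no re-rooting is needed and both sub-skeletons keep a natural root), whereas you peel off a single child subtree and re-root the remainder at $v$ --- your version works too, but requires exactly the degree-preservation check under re-rooting that you flag as the main bookkeeping step.
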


\medskip
Note that the quantities $1\vee\tilde \bG_n(0,0)^{-1}$ and $1 \vee \tilde \bG(0,0)^{-1}$ are bounded above by $p_2(0,0)^{-1}=2d$ when $n\geq 2$. Be warned, however that $1\vee\tilde \bG_n(0,0)^{-1}$ is infinite when $n \in \{0,1\}$. Later in the paper we will be careful to avoid this case.

\begin{proof}[Proof of \cref{lem:recursion}]
We will prove \eqref{eq:recursive_inequality}, the proof of \eqref{eq:recursive_inequality_truncated} being almost identical.
It suffices to prove that
  \begin{equation}
  M_k(x) \leq \max_{0<r<k}\Bigl\{ M_{r}(x)M_{k-r}(x) \Bigr\}
  \,\vee\, M_{k-1}(0) \tilde \bG(0,x) \, \vee\, \max_{0<r<k}\left\{ \sum_{y \in \Z^d} M_{r}(y) M_{k-r}(y) \tilde \bG(y,x) \right\}
\label{eq:recursion_detail}
\end{equation}
for every $k\geq 2$. Indeed, the first and second terms are each clearly smaller than the third multiplied by $M_1(0)^{-1}=\tilde \bG(0,0)^{-1}$ (consider the contributions to the sum in the third term from $y=0$ and $y=x$).

Let $k\geq 2$, let $S\in \cS_k'$, let $x\in \Z^d$, and let $\mathbf{x} =(0,\ldots,0,x) \in (\Z^d)^{k+1}$.  
We consider three cases, which correspond to the three terms being maximized over in the inequality \eqref{eq:recursion_detail}:
\begin{enumerate}
  \item $\ell(k)$ is not a leaf.
  \item $\ell(k)$ is a leaf and the parent of $\ell(k)$ is in $\partial V(S)$ (i.e. is labelled).
  \item $\ell(k)$ is a leaf and the parent of $\ell(k)$ is in $V^\circ(S)$ (i.e., is unlabelled).
\end{enumerate}

\paragraph{Case 1.} Let $a \geq 1$ be the number of labelled vertices that are descendants of $\ell(k)$ in $S$.  Since $\ell$ is injective, $\ell(k)$ is not the root of $S$ and $a<k$.
 Let $S_1$ be the $a$-skeleton formed by $\ell(k)$ and its descendants in $S$, where we consider $\ell(k)$ to be the root of $S_1$ and re-index the  labels if necessary so that the labelling function has domain $\{0,\ldots,a\}$. Similarly, let $S_2=(T_2,\ell_2)$ be the $(k-a)$-skeleton obtained from $S$ by deleting all the descendants of $\ell(k)$, and re-indexing the labels so that the labelling function $\ell_2$ has domain $\{0,1,\ldots,k-a\}$ and satisfies $\ell_2(k-a)=\ell(k)$. (In both cases, the details of relabelling are not important.) Having done this, we observe that, by the definitions, 
 \begin{equation}
\bD(0,\ldots,0,x;S) = \bD(x,0,\ldots,0;S_1)\bD(0,\ldots,0,x;S_2) \leq M_a(x)M_{k-a}(x).
\end{equation}
We deduce that if $S \in \cS_k'$ is such that $\ell(k)$ is not a leaf of $S$ then
 \begin{equation}
 \label{eq:case1}
\bD(0,\ldots,0,x;S)  \leq \max\Bigl\{ M_r(x)M_{k-r}(x) : 1 \leq r \leq k-1 \Bigr\},
\end{equation}
which corresponds to the first term in \eqref{eq:recursion_detail}.

\paragraph{Case 2.}
 We may define a $(k-1)$-skeleton $S'$ by deleting $\ell(k)$ from $S$.
   The definitions then ensure that
 \begin{equation}
 \label{eq:case2}
\bD(0,\ldots,0,x;S) = \mathbf{D}(0,\ldots,0;S') \tilde \bG(0,x) \leq  M_{k-1}(0)\tilde \bG(0,x),
 \end{equation}
 which corresponds to the second term in \eqref{eq:recursion_detail}.

\paragraph{Case 3.}
Let $v$ be the (unlabelled) parent of $\ell(k)$. Let $a$ be the number of labelled descendants of $v$ other than $\ell(k)$. Since $v$ is unlabelled it has at least two children, and therefore has $a \geq 1$.
Let $S_1$ be the $a$-skeleton consisting of $v$ and its descendants other than $\ell(k)$, where we consider $v$ to be the root of $S_1$ and re-index the other labels as appropriate. Similarly, let $S_2$ be the $(k-a)$-skeleton obtained from $S$ by deleting all the descendants of $v$ (but not $v$ itself), re-indexing all the remaining labelled vertices to have labels in $\{0,\ldots,k-a-1\}$, and giving $v$ the label $k-a$. (The details of how this is done are not important.) It follows from the definitions that
\[
\bD(0,\ldots,0,x;S)=\sum_{y\in \Z^d} \bD(0,\ldots,0,y;S_2) \bD(y,0,\ldots,0;S_1) \tilde \bG(y,x) \leq \sum_{y\in \Z^d} M_a(y) M_{k-a}(y) \tilde \bG(y,x).
\]
We deduce that if $S\in \cS_k'$ is such that $\ell(k)$ is a leaf and the parent of $\ell(k)$ is in $V^\circ(S)$ then
\begin{equation}
\label{eq:case3}
\bD(0,\ldots,0,x;S) \leq \max_{0<r<k} \left\{ \sum_{y\in \Z^d} M_r(y) M_{k-r}(y) \tilde \bG(y,x) \right\},
\end{equation}
which corresponds to the third term in \eqref{eq:recursion_detail}.

\medskip 

Since one of the three cases above holds for every $S \in \cS_k'$, the claimed inequality \eqref{eq:recursion_detail} follows from \eqref{eq:case1}, \eqref{eq:case2}, and \eqref{eq:case3}.
\end{proof}

We now note that bounds on $M_k$ and $M_{k,n}$ yield bounds on \emph{all} diagrams, i.e. also with non-injective labels.
Indeed, suppose that $S \in \cS_k$ for some $k\geq 1$ and that the labelling function of $S$ is not injective. Let $r=|\ell(\{0,\ldots,k\})|-1$, let $\sigma:\{0,\ldots,r\} \to \{0,\ldots,k\}$ be defined recursively by $\sigma(0)=0$ and $\sigma(i) = \min\{ j > \sigma(i-1) : \ell(j) \notin \ell(\{0,\ldots,\sigma(i-1)\})\}$ for each $1 \leq i \leq r$, and let $S'$ be the $r$-skeleton with the same underlying rooted plane tree as $S$ and with labelling function $\ell'(i)=\ell(\sigma(i))$. Then it follows from the definitions that
\begin{multline*}
\bD(x_0,x_1,\ldots,x_k;S) =\\ \mathbbm{1}\Bigl(x_i=x_j \text{ for every $0\leq,i,j \leq k$ with $\ell(i)=\ell(j)$}\Bigr) \bD(x_0,x_{\sigma(1)},\ldots,x_{\sigma(r)};S')
\end{multline*}
for every $x_0,x_1,\ldots,x_k \in \Z^d$. In particular, it follows that
\begin{equation}
\label{eq:max_noninjective}
\max_{S \in \cS_k} \bD(0,\ldots,0,x;S) \leq \max_{0 \leq r \leq k} M_r(x)
\end{equation}
for every $k\geq 0$ and $x\in \Z^d$. Similar reasoning gives that
\begin{equation}
\label{eq:max_noninjective_truncated}
\max_{S \in \cS_k} \bD_n(0,\ldots,0,x;S) \leq \max_{0 \leq r \leq k} M_{r,n}(x)
\end{equation}
for every $k\geq 0$, $x\in \Z^d$, and $n\geq 0$.

\section{Low dimensions}

In this section we prove the following proposition, which implies the case $d<4$ of \cref{thm:main,thm:main_offdiagonal}.
We remark that in this low dimensional case we do not require a sub-exponential tail for the offspring distribution, and a moment condition is sufficient.

\begin{prop} 
  \label{prop:low_dim}
  Suppose either that $d\in\{1,2\}$ and that the offspring distribution $\mu$ is critical, non-trivial, and has finite second moment, or that $d=3$ and the offspring distribution $\mu$ is critical, non-trivial, and has finite third moment. Then
  \[
    \P_{\mu,0}(L(x) \geq n) \asymp \min\left\{n^{-2/(4-d)}, \langle x \rangle^{-2} \right\}
    \qquad \text{ for every $n\geq 1$ and $x\in \Z^d$.}
  \]
  for every $n\geq 1$.
\end{prop}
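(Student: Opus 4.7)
We prove matching upper and lower bounds. Write $\alpha := 2/(4-d)$, so that the desired bound is $\asymp \min\{n^{-\alpha},\langle x\rangle^{-2}\}$.

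\textbf{Upper bound.} The bound $\P_{\mu,0}(L(x) \geq n) \leq \P_{\mu,0}(L(x) \geq 1) \preceq \langle x \rangle^{-2}$ is immediate from the hitting-probability estimate \eqref{eq:intro_hitting}, giving one half of the minimum. For the complementary estimate $\preceq n^{-\alpha}$, set $m := \lceil cn^{\alpha}\rceil$ and decompose
\[
\P_{\mu,0}(L(x) \geq n) \leq \P_{\mu,0}(L_m(x) \geq n) + \P_{\mu,0}(\partial T_m \neq \emptyset).
\]
The second term is $\asymp 1/m \asymp n^{-\alpha}$ by Kolmogorov's estimate. For the first, apply Markov's inequality to the $k$-th moment of $L_m(x)$, taking $k=2$ when $d\in\{1,2\}$ and $k=3$ when $d=3$; this is where the moment hypotheses on $\mu$ enter, through finiteness of $b_k$. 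Combining \cref{lem:moments_to_diagrams} with \eqref{eq:max_noninjective_truncated} gives $\E_{\mu,0}[L_m(x)^k] \preceq \sum_{r=0}^k M_{r,m}(x)$, and iterating the recursive inequality \cref{lem:recursion} with the heat kernel estimate \eqref{eq:GHKE} yields
\[
M_{2,m}(y) \preceq \begin{cases} m^{2} & d = 1\\ m & d = 2\\ \log m & d = 3\end{cases}
\quad\text{and}\quad
M_{3,m}(y) \preceq \sqrt m \ \text{ for } d = 3,
\]
uniformly in $y$. Substituting $m = cn^{\alpha}$ gives $\P_{\mu,0}(L_m(x) \geq n) \preceq M_{k,m}(x)/n^{k} \preceq n^{-\alpha}$ in each case. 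Note that for $d=3$ the bump to $k=3$ is necessary: the bound $M_{2,m} \preceq \log m$ would leave a spurious factor $\log n$.

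\textbf{Lower bound.} It suffices to prove the uniform claim that for some $c, c_1, C > 0$ and all $m \geq C\langle x\rangle^{2}$,
\[
\P_{\mu,0}\!\bigl(L_m(x) \geq c\, m^{(4-d)/2}\bigr) \succeq 1/m;
\]
indeed, applying this with $m := \lceil c_1 n^{\alpha}\rceil \vee C\langle x\rangle^{2}$ recovers both halves of the minimum. The claim is proved by a conditional Paley-Zygmund argument on the survival event $V_m := \{\partial T_m \neq \emptyset\}$, which has $\P_{\mu,0}(V_m) \asymp 1/m$ by Kolmogorov. Using a size-biased/spine decomposition of the Galton-Watson tree conditional on $V_m$ (or, equivalently, a direct diagrammatic computation in which the combinatorial factors $b_2$ at the unlabelled branching vertices encode the spine), one obtains
\[
\E_{\mu,0}[L_m(x) \mid V_m] \succeq m^{(4-d)/2}
\qquad\text{and}\qquad
\E_{\mu,0}[L_m(x)^2 \mid V_m] \preceq m^{4-d}
\]
uniformly for $\langle x\rangle \leq c_2\sqrt m$. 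The matching upper bound $\E[L_m(x) \mid V_m] \preceq m^{(4-d)/2}$ is automatic from $\E[L_m(x)] = \bG_m(0,x) \preceq m^{1-d/2}$ and $\P(V_m) \asymp 1/m$. Conditional Paley-Zygmund then gives $\P_{\mu,0}(L_m(x) \geq c\, m^{(4-d)/2} \mid V_m) \succeq 1$, and multiplying by $\P(V_m)$ delivers the claim.

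\textbf{Main obstacle.} The main difficulty is the lower bound. An unconditional Paley-Zygmund argument is too weak: the first moment $\E_{\mu,0}[L_m(0)] \asymp m^{1-d/2}$ is a factor of order $m \asymp \P(V_m)^{-1}$ smaller than the typical value $\asymp m^{(4-d)/2}$ of $L_m(0)$ on the survival event $V_m$, so unconditional second-moment control only gives deviations around $\E[L_m]$ rather than the large-deviation regime we need. Conditioning on $V_m$ (equivalently, weighting by its indicator in the diagrams, or passing through the size-biased/spine decomposition) realigns the first and second moments onto the correct scale and is the crux of the argument. The upper-bound side is by contrast routine once the right moment is used; the jump from $k=2$ to $k=3$ in dimension three, which forces the finite-third-moment hypothesis on $\mu$, is exactly enough to offset the logarithmic divergence of $M_{2,m}$.
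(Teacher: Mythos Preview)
Your upper bound matches the paper's argument.

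For the lower bound you take a different route, and there is a gap. The paper does \emph{not} condition on $V_m$ or invoke a spine decomposition. It applies the inequality
\[
\P\bigl(X \geq \eps\, \E[X \mid X>0]\bigr) \;\geq\; \frac{(1-\eps)^{p/(p-1)}\,\E[X]^{p/(p-1)}}{\E[X^p]^{1/(p-1)}}
\]
to $X=\sum_{\ell=r}^{2r}B_\ell(x)$. The point of summing only over generations $r$ through $2r$ (rather than using $L_r(x)$) is that $\{X>0\}\subseteq\{\partial T_r\neq\emptyset\}$, so $\E[X\mid X>0]\geq \E[X]/\P(\partial T_r\neq\emptyset)\succeq r^{(4-d)/2}$ is immediate from Kolmogorov and linearity of expectation---no spine is needed. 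Taking $p=2$ for $d\in\{1,2\}$ and $p=3$ for $d=3$ then gives $\P(X\geq cr^{(4-d)/2})\succeq r^{-1}$ directly from the \emph{unconditional} moment bounds already proved. For $d=3$ the passage to $p=3$ is exactly what absorbs the logarithm, playing the same role as your jump to $k=3$ in the upper bound.

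Your route instead requires both $\E[L_m(x)\mid V_m]\succeq m^{(4-d)/2}$ and $\E[L_m(x)^2\mid V_m]\preceq m^{4-d}$. Neither is automatic. Since $L_m(0)\geq 1$ always, $L_m$ does not vanish off $V_m$, so the first bound is not simply $\E[L_m]/\P(V_m)$. And for $d=3$ the second bound cannot be obtained by dividing the unconditional second moment by $\P(V_m)$: that yields $m\log m$, not $m$. Both bounds are in fact true and can be extracted from a spine decomposition, but you do not carry this out, and it is not a one-liner (for the conditional second moment in $d=3$ one needs a sharper $x$-dependent estimate on $M_{2,n}(x)$ to handle the bushes, and controlling the variance of the size-biased offspring distribution brings back the third-moment hypothesis on $\mu$ anyway). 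Your ``main obstacle'' paragraph therefore slightly misdiagnoses the difficulty: the paper shows that full conditioning on survival is unnecessary once one restricts attention to late generations and uses the $L^p$ form of Paley--Zygmund with unconditional moments.
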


\begin{remark}
  One can also obtain from our proof that if $d=3$ and $\mu$ has finite second moment then \[n^{-2} \log^{-1} (n+1) \preceq \P_{\mu,0}(L(0) \geq n) \preceq n^{-2} \log (n+1)\]
  for every $n\geq 1$.
\end{remark}


Our analysis is informed by the following heuristic: In low dimensions, the easiest way for the local time $L(x)$ to be large is for the genealogical tree to be sufficiently large, without any other unusual behaviour for the tree or the associated random walks.
Indeed, intuitively, if the genealogical tree survives to generation $k$, which occurs with probability $\Theta(k^{-1})$, then it typically contains roughly $k^2$ vertices, and the locations of the corresponding particles are roughly uniformly distributed on the ball of radius $k^{1/2}$. Thus, if $R$ denotes the survival time of the branching random walk, we should typically have that $L(x) =0$ if $R \ll \langle x \rangle^2$ and that $L(x)$ is $\Theta(R^{(4-d)/2})$ if $R = \Omega(\langle x \rangle^2)$.
Thus, we expect that the easiest way to have $L(x) \geq n$ is for $R$ to be at least $\min \left\{ \langle x \rangle^2, n^{2/(4-d)}\right\}$, which leads to the expression given in \cref{prop:low_dim}.
One may think of this heuristic argument as yielding a \emph{hyperscaling relation} for branching random walk below the critical dimension, and the proof of \cref{prop:low_dim} as a rigorous verification of this hyperscaling relation.

\medskip

We now begin the rigorous proof of \cref{prop:low_dim}. 
We shall see that it is sufficient to look at the first three moments of the truncated local time $L_n(x)$. (In dimensions $d=1,2$ it suffices to consider the first and second moment, while in $d=3$ dimensions using the second moment results in an unwanted logarithmic correction.)

\pagebreak

\begin{lemma} 
\label{lem:low_d_moments}
Let $\mu$ be critical, and let $d\in\{1,2,3\}$. Then the following moment bounds hold.
\begin{enumerate}[nosep,label=(\alph*)]
\item If $\mu$ has finite second moment then 
\begin{equation}
\label{eq:low_d_2nd_moment}
\E_{\mu,0}\left[L_n(x)^2\right] \preceq \begin{cases} n^{3-d} &  d \leq 2\\ 
  \log (n+1) & d=3
  \end{cases} 
  \qquad \text{ for every $x\in \Z^d$ and $n\geq 1$.} \end{equation} 
\item If $\mu$ has finite third moment, then \begin{equation}
\label{eq:low_d_3rd_moment}
\E_{\mu,0}\left[L_n(x)^3\right] \preceq  n^{(10-3d)/2} 
\qquad \text{ for every $x\in \Z^d$ and $n\geq 1$.}
\end{equation}
\end{enumerate}
\end{lemma}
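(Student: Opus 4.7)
The plan is to derive both moment bounds from the diagrammatic expansion in Proposition \ref{lem:moments_to_diagrams}, the recursive inequality in Lemma \ref{lem:recursion}, and the Gaussian heat-kernel estimates \eqref{eq:GHKE}. First, I would apply Proposition \ref{lem:moments_to_diagrams} to $\E_{\mu,0}[L_n(x)^k]$ for $k\in\{2,3\}$: since $\mu$ is critical with finite $k$-th moment, all weights $b_c$ with $c\leq k$ are finite, the number of isomorphism classes of $k$-skeletons is finite, and so combining with the non-injective reduction \eqref{eq:max_noninjective_truncated} we get
\[
\E_{\mu,0}[L_n(x)^k] \preceq \max_{0\leq r\leq k} M_{r,n}(x),
\]
with constants depending on $\mu,d,k$. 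Since $M_{0,n}\equiv 1$ and $M_{1,n}(x)=\tilde\bG_n(0,x)$ are controlled trivially, everything reduces to estimating $M_{2,n}(x)$ (for part (a)) and $M_{3,n}(x)$ (for part (b)).

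For $n\leq 1$ the claims are trivial since $L_n(x)\leq |T_n|$ has moments bounded by constants. For $n\geq 2$ the prefactor $1\vee \tilde\bG_n(0,0)^{-1}$ in Lemma~\ref{lem:recursion} is at most $2d$, so the recursion yields
\[
M_{2,n}(x) \preceq \sum_y \tilde\bG_n(0,y)^2\,\tilde\bG_n(y,x), \qquad M_{3,n}(x) \preceq \sum_y \tilde\bG_n(0,y)\,M_{2,n}(y)\,\tilde\bG_n(y,x).
\]
To evaluate these, I would derive from \eqref{eq:GHKE} the pointwise Gaussian envelope
\[
\tilde\bG_n(0,y) \preceq \begin{cases} n^{1/2}\,e^{-c|y|^2/n} & d=1,\\ \log\!\bigl(e+n/(|y|^2+1)\bigr)\,e^{-c|y|^2/n} & d=2,\\ (1+|y|)^{-1}\,e^{-c|y|^2/n} & d=3, \end{cases}
\]
obtained by splitting the time sum at $k\asymp|y|^2$. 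Substituting this into the $M_{2,n}$ bound and performing the change of variables $y=n^{1/2}u$ turns each sum into a bounded continuum integral (of a polynomial-times-Gaussian) multiplied by $n^{3-d}$ for $d\in\{1,2\}$, and into a logarithmically divergent radial integral cut off at $n^{1/2}$ for $d=3$, giving the desired $n^{3-d}$ and $\log(n+1)$ bounds respectively for part (a).

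For part (b), the same computation applied in the two-point form yields $M_{2,n}(y)\preceq F_d(n,y)$ with $F_1(n,y)\asymp n^2 e^{-c|y|^2/n}$, $F_2(n,y)\preceq n$ uniformly in $y$ (with Gaussian cutoff at $|y|\asymp n^{1/2}$), and $F_3(n,y)\preceq \log\!\bigl(e+n/(|y|^2+1)\bigr)$ on $|y|\leq Cn^{1/2}$ with rapid decay beyond. Plugging this into the $M_{3,n}$ recursion, the resulting double sum evaluates via the same scaling trick to $n^{(10-3d)/2}$ in each dimension.

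The main obstacle will be the $d=3$ case of the third moment. Using the uniform bound $\|M_{2,n}\|_\infty\asymp\log n$ combined with the elementary estimate $\sum_y \tilde\bG_n(0,y)\tilde\bG_n(y,x)\asymp n^{1/2}$ gives only $M_{3,n}(x)\preceq n^{1/2}\log n$, which is off by a logarithm. To remove it one must retain the $|y|$-decay in $M_{2,n}(y)\preceq \log(e+n/(|y|^2+1))$ and exploit the cancellation in
\[
\sum_y (1+|y|)^{-1}\,\log\!\bigl(e+n/(|y|^2+1)\bigr)\,(1+|y-x|)^{-1}\,e^{-c|y|^2/n},
\]
which after substituting $y=n^{1/2}u$ becomes a bounded integral times $n^{1/2}$. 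A milder but structurally identical phenomenon must also be handled in $d=2$ for part (a), where one must use the pointwise $\log(e+n/(|y|^2+1))$ bound rather than the supremum $\tilde\bG_n(0,0)\asymp\log n$; the proof of this radial decay from the diagrammatic sum is the main technical computation shared by both parts.
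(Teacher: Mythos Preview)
Your plan is correct, and it takes a genuinely different route from the paper. Both arguments begin identically---Proposition~\ref{lem:moments_to_diagrams}, the reduction \eqref{eq:max_noninjective_truncated}, and the recursion of Lemma~\ref{lem:recursion}---but diverge when it comes to evaluating the convolution $\sum_y \tilde\bG_n(0,y)^2\tilde\bG_n(y,x)$ and its third-moment analogue. The paper expands every factor $\tilde\bG_n$ as a time sum, performs the \emph{spatial} sum first (collapsing it to a factor $\min_i k_i^{d/2}$ via the Gaussian tail), and then controls the residual multi-time sum $\sum_{k_1,\dots,k_m}\prod_i k_i^{-d/2}\cdot(\min_i k_i)^{d/2}$ by bounding the minimum by suitable geometric means; for part~(a) a preliminary H\"older step reduces to the symmetric quantity $\sum_y\tilde\bG_n(0,y)^3$, removing the $x$-dependence cheaply. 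You instead sum over time first to obtain pointwise envelopes on $\tilde\bG_n(0,y)$, and then evaluate the spatial convolution by the scaling $y=n^{1/2}u$. Your route is closer in spirit to continuum potential theory and produces pointwise information on $M_{2,n}(y)$ that the paper's argument never needs; the price is exactly what you flag, namely the refined bound $M_{2,n}(y)\preceq\log\bigl(e+n\langle y\rangle^{-2}\bigr)$ in $d=3$ (and the analogous log profile on $\tilde\bG_n$ already in $d=2$) to avoid spurious logarithmic losses. The paper sidesteps this entirely via its time-variable calculus, so its third-moment computation is a little shorter, but both arguments are complete.
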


Note that these bounds are clearly not sharp when, say, $\langle x \rangle \gg \sqrt{n}$. This will not be a problem for us as the estimates are sharp in the regimes that we wish to apply them.

We will frequently use the easily proved fact that for every $c >0$ and $\alpha \in \R$ there exists a constant $C=C(c,\alpha)$ such that
\[
\sum_{r \geq 1} r^{\alpha} \exp\left[-c r^2/n \right] \leq C \begin{cases}
n^{(1+\alpha)/2} & \alpha>-1\\
\log (n+1) & \alpha = -1\\
1 & \alpha <-1
\end{cases}
\qquad \text{for every $n\geq 1$.}
\]

\begin{proof}[Proof of \cref{lem:low_d_moments}]
It suffices to consider the case $n \geq 2$, so that $1 \vee \tilde \bG_n(0,0)^{-1} \leq 4d^2 \preceq 1$.

\vspace{-5mm}
\paragraph{(a) Second moment.}
Let $S \in \cS_2$ be a $2$-skeleton. 
Fix $1 \leq d \leq 3$.
No $2$-skeleton has a vertex of degree more than three.
Since $b_0,b_1,b_2 < \infty$ by assumption, and there is a finite number (10) of $2$-skeletons, it suffices by \cref{lem:moments_to_diagrams} to prove that 
\begin{equation}
M_{k,n}(x) \preceq \begin{cases} n^{3-d} &  d \leq 2\\ 
  \log (n+1) & d=3\end{cases} \qquad \text{ for every $x\in \Z^d$ and $n\geq 1$.} 
  \label{eq:M2bound}
\end{equation}
for every $k=0,1,2$ and $x\in \Z^d$. This bound is trivially satisfied for $k=0$, since in this case $M_{0,n}(x) = \mathbbm{1}(x=0) \leq 1$. For $k=1$ we have that
\begin{equation}
\label{eq:kis1}
M_{1,n}(x) = \tilde \bG_n(0,x) \preceq \sum_{k=1}^n k^{-d/2} \preceq \begin{cases} n^{1/2} & d=1 \\
\log(n+1) & d=2\\
1 & d =3,
\end{cases}
\end{equation}
which is of lower order than the required bound.
For $k=2$, we apply \cref{lem:recursion} to deduce that
\begin{equation}
\label{eq:M2_expanded}
M_{2,n}(x) \preceq \sum_{y \in \Z^d} 
\tilde \bG_n(0,y)^2 \tilde \bG_n(y,x)
\end{equation}
for every $x \in \Z^d$ and $n\geq 2$.
By H\"older's inequality this yields
\[
  M_{2,n}(x) \preceq
  \bigg(\sum_{y \in \Z^d} \tilde \bG_n(0,y)^3 \bigg)^{2/3}
  \bigg(\sum_{y \in \Z^d} \tilde \bG_n(y,x)^3 \bigg)^{1/3}
  = \sum_{y \in \Z^d} \tilde \bG_n(0,y)^3.
\]

Applying the Gaussian heat kernel estimates \cref{eq:GHKE} we deduce that there exists a positive constant $c$ such that
\begin{equation}
M_{2,n}(x)
 \\\preceq 
\sum_{y \in \Z^d} \sum_{k_1=1}^n \sum_{k_2=1}^n \sum_{k_3=1}^n
k_1^{-d/2}k_2^{-d/2}k_3^{-d/2}\exp\left[ -\frac{c \langle y \rangle^2}{k_1} -\frac{c \langle y \rangle^2}{k_2}-\frac{c \langle x-y \rangle^2}{k_3} \right].
\end{equation}
Using that $\#\{y \in \Z^d: \langle y \rangle=r\} = O(r^{d-1})$ and changing variables to $z=x-y$ if $k_3=\min \{k_1,k_2,k_3\}$, we have that
\begin{align*}
\sum_{y \in \Z^d} \exp\left[ -\frac{c \langle y \rangle^2}{k_1} -\frac{c \langle y \rangle^2}{k_2}-\frac{c \langle x-y \rangle^2}{k_3} \right] &\preceq \sum_{r=1}  r^{d-1} \exp\left[-\frac{cr^2}{\min\{k_1,k_2,k_3\}}\right]\\ &\preceq \min\{k_1,k_2,k_3\}^{d/2}
\end{align*}
and hence that
\[
M_{2,n}(x) \preceq \sum_{k_1=1}^n \sum_{k_2=1}^n \sum_{k_3=1}^n k_1^{-d/2}k_2^{-d/2}k_3^{-d/2} \min \{k_1,k_2,k_3\}^{d/2}.
\]
If $d \in \{1,2\}$, we bound $\min\{k_1,k_2,k_3\}^{d/2} \leq k_1^{d/6} k_2^{d/6} k_3^{d/6}$ and deduce that
\[
M_{2,n}(x) \preceq \sum_{k_1=1}^n \sum_{k_2=1}^n \sum_{k_3=1}^n k_1^{-d/3}k_2^{-d/3}k_3^{-d/3}
\preceq
n^{3-d}
\]
for every $n\geq 2$ as claimed. Meanwhile, if $d=3$, we compute that
\[
M_{2,n}(x) \preceq  \sum_{k_1=k_3}^n \sum_{k_2=k_3}^n \sum_{k_3=1}^n k_1^{-3/2}k_2^{-3/2} \preceq \sum_{k_3=1}^n k_3^{-1} \preceq \log (n+1)
\]
for every $n\geq 2$ as claimed.

\paragraph{(b) Third moment.} 
Since no $3$-skeleton has a vertex with more than 3 offspring,
and since $b_0,b_1,b_2,b_3 < \infty$ by assumption, 
it suffices by \cref{lem:moments_to_diagrams} to prove that 
\[
M_{k,n}(x) \preceq n^{(10-3d)/2}
\]
for every $n\geq 2$, $k=0,1,2,3$, and $x\in \Z^d$. The fact that this bound is satisfied for $k=0,1,2$ has already been established.
For $k=3$, we apply \cref{lem:recursion} and \eqref{eq:M2_expanded} to deduce that
\[
M_{3,n}(x) \preceq \sum_{y \in \Z^d} \sum_{z\in \Z^d} \tilde \bG_n(0,z)^2 \tilde \bG_n(z,y) \tilde \bG_n(0,y) \tilde \bG_n(y,x).
\]
As before, we apply the Gaussian heat kernel estimates \eqref{eq:GHKE} to bound this sum by
\begin{multline*}
M_{3,n}(x)
 \preceq 
\sum_{y,z \in \Z^d} \sum_{1 \leq k_1,\ldots,k_5 \leq n}
k_1^{-d/2}k_2^{-d/2}k_3^{-d/2}k_4^{-d/2}k_5^{-d/2}\\
\cdot\exp\left[ -\frac{c \langle z \rangle^2}{k_1} -\frac{c \langle z \rangle^2}{k_2} - \frac{c \langle z-y \rangle^2}{k_3}  -\frac{c \langle y \rangle^2}{k_4} -\frac{c \langle x-y \rangle^2}{k_5} \right].
\end{multline*}
By similar reasoning to above, we can bound 
\begin{multline*}
\sum_{y,z \in \Z^d} \exp\left[ -\frac{c \langle z \rangle^2}{k_1} -\frac{c \langle z \rangle^2}{k_2} - \frac{c \langle z-y \rangle^2}{k_3}  -\frac{c \langle y \rangle^2}{k_4} -\frac{c \langle x-y \rangle^2}{k_5} \right] 
\\\preceq \sum_{z \in \Z^d} \exp\left[ -\frac{c \langle z \rangle^2}{k_1} -\frac{c \langle z \rangle^2}{k_2}\right] \min\{k_3,k_4,k_5\}^{d/2}\\
\preceq 
\min\{k_1,k_2\}^{d/2} \min \{k_3,k_4,k_5\}^{d/2}.
\end{multline*}
By symmetry we can also bound the left hand side by 
$\min\{k_1,k_2,k_3\}^{d/2} \min \{k_4,k_5\}^{d/2}$.
Now observe that, using that $\min \{k_3,k_4,k_5\} \leq k_3^{2/5} k_4^{3/10} k_5^{3/10}$
\begin{multline*}
\min\left\{\min\{k_1,k_2\}^{d/2} \min \{k_3,k_4,k_5\}^{d/2}, \min\{k_1,k_2,k_3\}^{d/2} \min \{k_4,k_5\}^{d/2} \right\}
\\\leq \min\left\{k_1^{d/4}k_2^{d/4} k_3^{d/5}k_4^{3d/20}k_5^{3d/20}, k_1^{3d/20}k_2^{3d/20}k_3^{d/5}k_4^{d/4}k_5^{d/4} \right\}
\leq \prod_{i=1}^5 k_i^{d/5},
\end{multline*}
where we once again bounded the minimum by the geometric mean  and used that $(1/4+3/20)/2 = 2/5$ in the final inequality.
Thus, we may bound
\[M_{3,n}(x)
 \preceq 
 \sum_{1 \leq k_1,\ldots,k_5 \leq n} \prod_{i=1}^5 k_i^{-3d/10} \preceq \left(n^{1-3d/10}\right)^5 = n^{(10-3d)/2}
 \]
 for every $n\geq 1$ and $x \in \Z^d$ as required.
\end{proof}

Before applying \cref{lem:low_d_moments} to prove \cref{prop:low_dim}, let us recall the Paley-Zygmund inequality and its higher-moment variants. The usual Paley-Zygmund inequality states that if $X$ is a non-negative random variable with finite second moment then
\[
\P\left(X \geq \eps \E[X]\right)\geq \frac{(1-\eps)^2 \E[X]^2}{\E[X^2]}
\]
for every $0\leq \eps \leq 1$. Applying this inequality to the conditional distribution of a non-negative random variable $X$ given that $X>0$ and doing a little algebra, we obtain that in fact
\[
\P\left(X \geq \eps \E\left[X \mid X>0\right]\right)\geq \frac{(1-\eps)^2 \E[X]^2}{\E[X^2]}
\]
for every $0 \leq \eps \leq 1$. 

The Paley-Zygmund inequlity also has the following $L^p$ version. We include a short proof since this inequality is less standard.

\begin{lemma} Let $X$ be a non-negative random variable. Then
\[
\P\left(X \geq \eps \E\left[X\right]\right) \geq \frac{(1-\eps)^{p/(p-1)} \E[X]^{p/(p-1)}}{\E[X^p]^{1/(p-1)}}
\]
for every $p >1$ and $0 \leq \eps \leq 1$.
\end{lemma}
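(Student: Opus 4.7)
The plan is to imitate the classical one-line argument for the usual Paley--Zygmund inequality, but replace the Cauchy--Schwarz step with Hölder's inequality at the exponent $p$. The key identity is the trivial decomposition
\[
\E[X] = \E\!\left[X\,\mathbbm{1}(X < \eps \E[X])\right] + \E\!\left[X\,\mathbbm{1}(X \geq \eps \E[X])\right],
\]
whose first term is bounded above by $\eps \E[X]$, so that
\[
(1-\eps)\E[X] \leq \E\!\left[X\,\mathbbm{1}(X \geq \eps \E[X])\right].
\]
(If $\E[X]=\infty$ the claim is vacuous, and if $\E[X]=0$ then the inequality is trivial since $X=0$ almost surely; so we may assume $\E[X]\in(0,\infty)$ throughout.)

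Next I would apply Hölder's inequality with conjugate exponents $p$ and $q = p/(p-1)$ to the product $X \cdot \mathbbm{1}(X \geq \eps \E[X])$, obtaining
\[
\E\!\left[X\,\mathbbm{1}(X \geq \eps \E[X])\right] \leq \E[X^p]^{1/p}\,\P\!\left(X \geq \eps \E[X]\right)^{(p-1)/p}.
\]
Combining this with the lower bound from the previous display and rearranging gives
\[
\P\!\left(X \geq \eps \E[X]\right)^{(p-1)/p} \geq \frac{(1-\eps)\,\E[X]}{\E[X^p]^{1/p}},
\]
and raising both sides to the power $p/(p-1)$ yields the stated inequality. If $\E[X^p]=\infty$ the bound is again vacuous, so this manipulation is legitimate.

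There is no genuine obstacle here; the only mild subtlety is remembering that Hölder's inequality is being applied with the indicator as one of the factors, which is what produces $\P(\cdot)^{1/q}$ on the right-hand side rather than a full $L^q$ norm of $X$. The argument specializes to the familiar $p=2$ Paley--Zygmund inequality recalled just before the statement.
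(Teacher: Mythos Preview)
Your proof is correct and is essentially the same as the paper's: decompose $\E[X]$ according to whether $X$ is above or below $\eps\E[X]$, bound the small part by $\eps\E[X]$, apply H\"older with exponents $p$ and $p/(p-1)$ to the large part, and rearrange. The only cosmetic difference is that the paper writes the first bound as $\eps\E[X]\,\P(X<\eps\E[X])\le \eps\E[X]$ before proceeding.
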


\begin{proof}
H\"older's inequality implies that
\begin{align*}
\E[X] &\leq \eps \E\left[X \right]\P\left(X < \eps \E\left[X \right] \right) + \E\left[X \mathbbm{1}\left(X \geq \eps \E\left[X \right]\right)\right]\\
&\leq \eps \E\left[X\right]+ \E\left[X^p\right]^{1/p} \P\left(X \geq \eps \E\left[X\right]\right)^{(p-1)/p}.
\end{align*}
Rearranging gives the desired inequality.
\end{proof}

Now suppose that $X$ is a nonnegative random variable. Applying the above inequality to a random variable $Z$ distributed according to the conditional distribution of $X$ given $X>0$ gives that
\begin{multline}
\P\left(X \geq \eps \E\left[X \mid X>0\right]\right) = \P(X>0)\P\left(Z \geq \eps \E[Z]\right) \\
\geq 
\P(X>0)\frac{(1-\eps)^{p/(p-1)} \E[Z]^{p/(p-1)}}{\E[Z^p]^{1/(p-1)}} = 
\frac{(1-\eps)^{p/(p-1)} \E[X]^{p/(p-1)}}{\E[X^p]^{1/(p-1)}}
\label{eq:generalized_Paley_Zygmund}
\end{multline}
for every $p>1$ and $0\leq \eps \leq 1$.

\begin{proof}[Proof of \cref{prop:low_dim}]
Let $1 \leq d \leq 3$. We assume that $\mu$ has finite second moment if $d\in \{1,2\}$ and that $\mu$ has finite third moment if $d =3$. 
We begin with the upper bounds.
We have by \eqref{eq:intro_hitting} that
\[
\P(L(x) \geq n ) \leq \P(L(x)\geq 1) \leq \E L(x) \asymp \langle x \rangle^{-2}
\]
for every $n\geq 1$ and $x\in \Z^d$.
Thus it suffices to prove that
\[
\P(L(x) \geq n ) \preceq n^{-2/(4-d)}
\]
for every $n\geq 1$ and $x\in \Z^d$.
Since $\mu$ has finite second moment, is critical and non-trivial, we have that
\[
\P_{\mu,0}(\partial T_r \neq \emptyset)\asymp \frac{1}{r} \qquad \text{ for every $r\geq 1$},
\]
where $T$ is the genealogical tree of the branching process. Thus, we can bound 
\begin{multline*}
\P_{\mu,0}(L(x) \geq n) \leq 
\P_{\mu,0} \left(L_r(x)\geq n \right) + \P_{\mu,0}(\partial T_r \neq \emptyset)
\\
\leq
\frac{1}{n^d} \E_{\mu,0} \left[L_r(x)^d\right] + \P_{\mu,0}(\partial T_r \neq \emptyset)
\preceq 
\begin{cases}
n^{-1} r^{1/2} + r^{-1} & d=1\\
n^{-2} r + r^{-1} & d=2\\
n^{-3} r^{1/2} + r^{-1} & d=3
\end{cases}
\end{multline*}
for every $n,r\geq 1$. Taking $r= n^{2/3}$ when $d=1$, $r=n$ when $d=2$, and $r=n^2$ when $d=3$, we obtain that
\[
\P_{\mu,0}(L(x) \geq n) \preceq n^{-2/(4-d)}
\]
for every $n\geq 1$ and $x\in \Z^d$ as desired.

\medskip

We now turn to the lower bounds. It suffices to prove that there exists a constant $c$ such that
\[
\P_{\mu,0}(L(x) \geq n) \succeq n^{-2/(4-d)} 
\]
for every $x\in \Z^d$ and every $n \geq c \langle x \rangle^{4-d}$: the required bound for smaller $n$ follows since $\P_{\mu,0}(L(x) \geq n)$ is a decreasing function of $n$. 
For each $r \geq 1$ we have  by linearity of expectation that
\begin{align*}
\E_{\mu,0}\left[\sum_{\ell=r}^{2r} B_\ell (x) \bigg|\sum_{\ell=r}^{2r} B_\ell (x) >0 \right] &\geq 
\E_{\mu,0}\left[\sum_{\ell=r}^{2r} B_\ell (x) \bigg| \partial T_r \neq \emptyset\right] \\
&= \E_{\mu,0}\left[\sum_{\ell=r}^{2r} |\partial T_\ell|  p_\ell(0,x) \right] \P(\partial T_r \neq \emptyset)^{-1} \asymp r \sum_{\ell=r}^{2r}p_\ell(0,x)
\end{align*}
for every $x\in \Z^d$ and $r\geq 1$.
If $r \succeq \langle x \rangle^2$ and $\ell$ has the right parity then $p_\ell(0,x) \asymp r^{-d/2}$.
It follows that 
\begin{equation}
\label{eq:conditionalfirstmoment}
\E_{\mu,0}\left[\sum_{\ell=r}^{2r} B_\ell (x) \bigg| \sum_{\ell=r}^{2r} B_\ell (x) >0 \right] \succeq r^{(4-d)/2} 
\end{equation}
for every $x\in \Z^d$ and $r\geq \langle x \rangle^2$.

Suppose that $d \in \{1,2\}$. 
We deduce from \eqref{eq:conditionalfirstmoment}, \eqref{eq:low_d_2nd_moment} and the Paley-Zygmund inequality that there exists a constant $c>0$ such that if $r\geq \langle x \rangle^2$ then
\[
\P_{\mu,0}\left[L(x) \geq c  r^{(4-d)/2} \right] \geq \P_{\mu,0}\left[\sum_{\ell=r}^{2r} B_\ell (x) \geq c  r^{(4-d)/2} \right] \succeq \frac{r^{2-d}}{r^{3-d}}=\frac{1}{r},
\]
 and the desired lower bound follows by taking $r=\lceil (n/c)^{2/(4-d)}\rceil $.
Now suppose that $d=3$. 
Applying \eqref{eq:generalized_Paley_Zygmund} with $p=3$ we obtain that there exists a constant $c$ such that 
\[
\P_{\mu,0}\left[L(x) \geq c  r^{(4-d)/2} \right] \geq \P_{\mu,0}\left[\sum_{\ell=r}^{2r} B_\ell (x) \geq c  r^{(4-d)/2} \right] \succeq r^{-1},
\]
and we conclude as before.
\end{proof}

\section{High dimensions}

In this section we treat the case $d \geq 5$. 

\begin{prop}
\label{prop:high_d}
Let $d\geq 5$ and suppose that the offspring distribution $\mu$ is critical, non-trivial, and sub-exponential. Then
\[
\P_{\mu,0}(L(x)\geq n) =  \exp\left[-\Theta(n)\right] \langle x \rangle^{-d+2}
\]
for every $n\geq 1$ and $x\in \Z^d$.
\end{prop}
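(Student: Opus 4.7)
I split $\P_{\mu,0}(L(x)\geq n)$ into matching upper and lower bounds. For the upper bound my goal is to prove the moment bound $\E_{\mu,0}[L(x)^k] \leq C^k\,k!\,\langle x\rangle^{-d+2}$ for all $k\geq 1$ and $x\in\Z^d$, from which summing the exponential series yields $\E_{\mu,0}[e^{\lambda L(x)}\mathbbm{1}(L(x)\geq 1)] \preceq \langle x\rangle^{-d+2}$ for $\lambda$ small enough (also using $\P_{\mu,0}(L(x)\geq 1)\preceq\langle x\rangle^{-d+2}$ from \eqref{eq:intro_hitting}), and Markov's inequality then gives $\P_{\mu,0}(L(x)\geq n)\preceq e^{-\lambda n}\langle x\rangle^{-d+2}$. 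To prove the moment bound, I first establish by induction on $k$ using \cref{lem:recursion} that $M_k(x)\leq A^k\langle x\rangle^{-d+2}$ for some constant $A$: the convolution estimate
\[
\sum_{y\in\Z^d}\langle y\rangle^{-2(d-2)}\,\tilde\bG(y,x)\;\preceq\; \langle x\rangle^{-d+2}
\]
holds precisely because $2(d-2)>d$ when $d\geq 5$, and with it the inductive step of \cref{lem:recursion} closes as long as $A$ is large enough that $A^k \geq C' A^r A^{k-r}$ for all $0<r<k$. Combining with \cref{lem:moments_to_diagrams} and \eqref{eq:max_noninjective} (the latter used to absorb non-injective labellings), I deduce $\E_{\mu,0}[L(x)^k]\preceq A^k\langle x\rangle^{-d+2}\cdot N_k$, where $N_k := \sum_{S\in\cS_k}\prod_{u\in V(S)}b_{c(u)}$. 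Finally, the sub-exponential moment assumption ensures $b_j\leq C t^{-j}$ for some $t>0$, and from this one bounds $N_k\leq B^k k!$ via a recursive analysis of the skeleton structure.

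\textbf{Lower bound.} Let $v_0$ be the first particle of $V(T)$, in some fixed deterministic ordering, satisfying $X_{v_0}=x$, if any. By the branching Markov property, the subtree of $T$ rooted at $v_0$ is, conditional on the existence of $v_0$, an independent branching random walk started at $x$, whose contribution to $L(x)$ is distributed as $L(0)$ under $\P_{\mu,0}$ by translation invariance. Combined with the Le Gall--Lin lower bound from \eqref{eq:intro_hitting}, this yields
\[
\P_{\mu,0}(L(x)\geq n)\;\geq\;\P_{\mu,0}(L(x)\geq 1)\,\P_{\mu,0}(L(0)\geq n)\;\succeq\;\langle x\rangle^{-d+2}\,\P_{\mu,0}(L(0)\geq n),
\]
reducing matters to showing $\P_{\mu,0}(L(0)\geq n)\succeq c^n$. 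For this I use the \emph{visit tree} to the origin---the tree on $V_0:=\{v\in V(T):X_v=0\}$ in which each $v$'s parent is its closest $V_0$-ancestor in $T$---which, by the branching Markov property, is a Galton--Watson tree with some offspring distribution $D$ and total size $L(0)$. This tree is subcritical because $\E[L(0)]=\bG(0,0)\in(1,\infty)$ for $d\geq 3$, and moreover $\P(D\geq 1)=\P_{\mu,0}(L(0)\geq 2)>0$: this strict inequality holds because $L(0)\geq 1$ almost surely while $\E[L(0)]=\bG(0,0)>1$. The standard lineage lower bound for Galton--Watson trees (consider the event that a chain of $n$ vertices each with at least one offspring exists) then gives $\P_{\mu,0}(L(0)\geq n)\geq \P(D\geq 1)^{n-1}\succeq c^n$.

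\textbf{Main obstacle.} The most delicate step is the combinatorial bound $N_k\leq B^k k!$: the diagrammatic estimate $M_k(x)\preceq A^k\langle x\rangle^{-d+2}$ is clean, but controlling the weighted skeleton count requires careful tracking of how the skeleton combinatorics interacts with the offspring weights $b_{c(u)}$. A cleaner alternative, which I would use as a fallback, is to work directly with the moment generating function $g(y,\zeta) := \E_{\mu,0}[e^{\zeta L(y)}]$, which satisfies the functional equation $g(y,\zeta) = e^{\zeta\mathbbm{1}(y=0)}\phi(\bar g(y,\zeta))$, where $\phi(s)=\sum_n\mu(n)s^n$ and $\bar g(y,\zeta)=\frac{1}{2d}\sum_{z\sim 0}g(y-z,\zeta)$. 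A contraction fixed-point argument in the weighted Banach space $\{f:\Z^d\to\R:\sup_y|f(y)|\langle y\rangle^{d-2}<\infty\}$, using the same convolution estimate to control the quadratic nonlinearity $\phi(1+v)-1-v=\sum_{j\geq 2}b_j v^j$ away from the origin and the $O(\zeta)$ forcing at the origin, produces $g(y,\zeta)-1\preceq \zeta\langle y\rangle^{-d+2}$ for $\zeta$ small, and the upper tail then follows by Markov's inequality as above.
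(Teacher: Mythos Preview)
Your approach matches the paper's in all essentials: for the upper bound you use the diagrammatic moment expansion, the recursive inequality for $M_k$ together with the convolution estimate $\sum_y\langle y\rangle^{-2(d-2)}\langle x-y\rangle^{-d+2}\preceq\langle x\rangle^{-d+2}$ (this is the paper's \cref{lem:bubble}), the skeleton partition bound $N_k\preceq B^kk!$ (the paper's \cref{lem:skeleton_partition}), and then the exponential Markov argument; for the lower bound you reduce to $x=0$ via the branching Markov property at the first visit and then establish an exponential lower bound for $\P(L(0)\geq n)$.

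There is, however, a slip in your induction on $M_k$. Since $A^rA^{k-r}=A^k$ identically, the condition ``$A^k\geq C'A^rA^{k-r}$'' is just $C'\leq 1$, which is independent of $A$; taking $A$ large does not help and the induction hypothesis $M_k(x)\leq A^k\langle x\rangle^{-d+2}$ does not close as written. The paper repairs this by inducting on $M_k(x)\leq C_1\lambda^{k-1}\langle x\rangle^{-d+2}$ instead: the product $M_rM_{k-r}$ then carries only $\lambda^{k-2}$, and one extra factor of $\lambda:=[1\vee\tilde\bG(0,0)^{-1}]C_1^2C_2$ is exactly what the recursion supplies. Equivalently you may induct on $M_k\leq cA^k$ with $c$ small enough that $C'c\leq 1$ and then take $A$ large enough for the base case.

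Two minor remarks. Your visit-tree Galton--Watson argument for $\P(L(0)\geq n)\succeq c^n$ is correct and a clean alternative to the paper's more hands-on iteration (a grandchild is back at the origin with probability at least $q/(2d)$, then repeat). And you do not actually need \eqref{eq:intro_hitting} in the upper bound: the paper absorbs the $k=0$ term via the elementary inequality $e^{\lambda L}\mathbbm{1}(L>0)\leq\frac{e^\lambda}{e^\lambda-1}(e^{\lambda L}-1)$, and in any case $\P(L(x)\geq 1)\leq\E L(x)=\bG(0,x)\preceq\langle x\rangle^{-d+2}$ already gives the needed decay without invoking Le~Gall--Lin.
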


The lower bound is simple, and most of our work will go into proving the upper bound. 
By a standard computation, which we reproduce below, it  suffices to prove that there exists a constant $C=C(\mu,d)$ such that $\E_{\mu,0}[L(x)^k] \leq C^k k! \langle x \rangle^{-d+2}$ for every $k\geq 1$.
Thus, applying \cref{lem:moments_to_diagrams}, it suffices to prove the following two lemmas.
Recall that $c(u)$ is the number of offspring of a vertex $u$ in a skeleton.

\begin{lemma}[The skeleton partition function]
\label{lem:skeleton_partition}
If $\mu$ is critical and sub-exponential then there exist a constant $\kappa=\kappa(\mu)$ such that $\sum_{S \in \cS_k} \prod_{u\in S} b_{c(u)} \preceq \kappa^k k!$ for every $k\geq 1$.
\end{lemma}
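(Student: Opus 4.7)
The plan is to bound $Z_k := \sum_{S \in \cS_k} \prod_{u\in V(S)} b_{c(u)}$ by grouping skeletons according to their total number of vertices $n$, then separately controlling (i) the weight of a skeleton with $n$ vertices and (ii) how many such skeletons there are.

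The first step is a structural bound: any $k$-skeleton has $n \leq 2k+1$ vertices. Writing $m = |\partial V(S)|$ for the number of labelled vertices and $j = |V^{\circ}(S)|$ for the number of unlabelled ones, we trivially have $m \leq k+1$; on the other hand, the defining property of a skeleton gives $\sum_{u \in V^{\circ}(S)} c(u) \geq 2j$, and the elementary tree identity $\sum_{u \in V(S)} c(u) = n-1$ then forces $2j \leq m+j-1$, i.e.\ $j \leq m - 1 \leq k$. Combined with the identity $\sum_u c(u) = n-1$, this bound will also control the total number of factors appearing in $\prod_u b_{c(u)}$ and the total child count they share.

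The second step is a geometric bound $b_c \leq C_1 \lambda^c$ valid for all $c$. Sub-exponentiality $\mu(n) \leq C e^{-c_0 n}$, applied in the identity $\sum_{n\geq c} \binom{n}{c} s^n = s^c (1-s)^{-(c+1)}$ with $s = e^{-c_0}$, yields this with $\lambda = e^{-c_0}/(1-e^{-c_0})$ and $C_1 = C/(1-e^{-c_0})$. Using $\sum_u c(u) = n-1$ once more,
\[
\prod_{u \in V(S)} b_{c(u)} \leq C_1^n \lambda^{n-1}
\]
for any skeleton on $n$ vertices. The third step is the enumeration: the number of rooted plane trees on $n$ vertices is the Catalan number $C_{n-1} \leq 4^{n-1}$ (and rooted plane trees have no nontrivial automorphisms, as noted in the paper), and for any such tree the labelling function $\ell : \{0,\ldots,k\} \to V(S)$ has at most $n^k$ choices once $\ell(0)$ is pinned to the root. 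Multiplying everything together,
\[
Z_k \;\leq\; \sum_{n=1}^{2k+1} 4^{n-1}\, n^k \, C_1^n \lambda^{n-1} \;\leq\; (2k+1)(4C_1\lambda)^{2k+1}(2k+1)^k.
\]
The desired conclusion then follows from $(2k+1)^k \leq (3k)^k \leq (3e)^k k!$, where the second inequality is Stirling's $k^k \leq e^k k!$; the exponential factors and the polynomial $(2k+1)$ can be absorbed into a single $\kappa^k$ with $\kappa = \kappa(\mu)$.

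There is no substantial conceptual obstacle. The delicate point, and the reason the argument works, is that the two ingredients cooperate precisely: even without any weight, the raw count of $k$-skeletons already grows like $k^k \sim e^k k!$, so any slack in the bound on $b_c$ would be fatal. Sub-exponentiality is exactly what is needed to make $b_c$ geometric in $c$ rather than super-exponential, and combined with the budget $\sum_u c(u) = n - 1 \leq 2k$ on the total number of children, this keeps the per-skeleton weight merely exponential in $k$.
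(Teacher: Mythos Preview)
Your proof is correct and follows essentially the same approach as the paper: bound $b_c$ geometrically via the generating function identity, convert $\prod_u b_{c(u)}$ into an exponential in $n=|V(S)|$ using $\sum_u c(u)=n-1$, count skeletons on $n$ vertices by Catalan numbers times $n^k$ labellings, use the structural bound $n\le O(k)$, and finish with $k^k\le e^k k!$. The only differences are cosmetic (you obtain $n\le 2k+1$ via a child-count argument whereas the paper gets $n\le 2k$ via degrees), and your displayed bound $(2k+1)(4C_1\lambda)^{2k+1}(2k+1)^k$ is not literally valid when $4C_1\lambda<1$ or $4\lambda<1$---replace $4C_1\lambda$ by $\max(1,4C_1\lambda)$ and absorb the stray $C_1$---but this does not affect the argument.
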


\begin{lemma}[Contribution of a single skeleton]
\label{lem:high_d_diagrams}
Let $d\geq 5$. There  exists a constant $\lambda=\lambda(d)$ such that
\[\bD(0,x,\ldots,x;S) \leq \lambda^k \langle x \rangle^{-d+2}\] for every $k \geq 0$, $S \in \cS_k$ and $x\in \Z^d$.
\end{lemma}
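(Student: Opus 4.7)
The plan is to prove the lemma by induction on $k$, after first reducing to injective labellings. Specifically, I will establish that there exist constants $A$ and $\lambda$, depending only on $d$, such that
\[
M_k(x) \leq A \lambda^k \langle x \rangle^{-(d-2)} \qquad \text{for every } k \geq 1 \text{ and } x \in \Z^d.
\]
The statement of the lemma then follows: for injective labellings it is exactly the above (absorbing $A$ into $\lambda$ by enlarging it), and for non-injective labellings $\ell$ of $\cS_k$ evaluated at $(0,x,\ldots,x)$, the automatic compatibility $x_i = x_j = x$ (for $i,j \geq 1$) together with the reduction described around \eqref{eq:max_noninjective} gives $\bD(0,x,\ldots,x;S) = \bD(0,x,\ldots,x;S')$ for some injective $r$-skeleton $S'$ with $1 \leq r \leq k$, so that $\bD \leq M_r(x) \leq A\lambda^r \langle x\rangle^{-(d-2)} \leq A\lambda^k \langle x\rangle^{-(d-2)}$ once we arrange $\lambda \geq 1$. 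The base case $k=1$ of the induction is immediate from the Green's function estimate \eqref{eq:Greens}, which gives $M_1(x) = \tilde \bG(0,x) \leq \bG(0,x) \leq C_G \langle x\rangle^{-(d-2)}$.

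For the inductive step, I feed the hypothesis into the recursive inequality \eqref{eq:recursive_inequality}. Using that $1 \vee \tilde \bG(0,0)^{-1} \leq 2d$ and $\tilde\bG \leq \bG \leq C_G \langle \cdot \rangle^{-(d-2)}$, the recursion yields
\[
M_k(x) \leq 2d \cdot A^2 \lambda^k \cdot C_G \max_{0<r<k} \sum_{y\in\Z^d} \langle y\rangle^{-2(d-2)} \langle y-x\rangle^{-(d-2)}.
\]
So the entire induction reduces to establishing the convolution estimate
\[
\sum_{y\in\Z^d} \langle y\rangle^{-2(d-2)} \langle y-x\rangle^{-(d-2)} \leq K \langle x \rangle^{-(d-2)} \qquad \text{for every } x \in \Z^d,
\]
for some constant $K = K(d)$. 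Once this is in hand, we close the induction by choosing $A$ small enough that $2d \cdot A \cdot C_G \cdot K \leq 1$, and then $\lambda$ large enough that $\lambda \geq 1$ and $C_G \leq A \lambda$ (so the base case is compatible with the chosen $A$).

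The key step, and the one where the hypothesis $d \geq 5$ is actually used, is the convolution estimate. I will prove it by the standard splitting of $\Z^d$ at the midway ball. On $\{y : |y-x| \geq |x|/2\}$, the factor $\langle y-x\rangle^{-(d-2)}$ is at most $2^{d-2}\langle x\rangle^{-(d-2)}$ and can be pulled out, leaving the uniformly convergent sum $\sum_y \langle y\rangle^{-2(d-2)}$, which converges precisely because $2(d-2) > d$ when $d \geq 5$. On the complementary region $\{y : |y-x| < |x|/2\}$, the triangle inequality forces $|y| \geq |x|/2$, so $\langle y\rangle^{-2(d-2)} \preceq \langle x\rangle^{-2(d-2)}$, while the remaining sum $\sum_{|y-x|<|x|/2} \langle y-x\rangle^{-(d-2)}$ is a standard ball sum of order $|x|^2$; combining gives $\langle x\rangle^{-2d+6} \preceq \langle x\rangle^{-(d-2)}$, the inequality $-2d+6 \leq -(d-2)$ holding for every $d \geq 4$.

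The main obstacle is really just the convolution bound above; everything else is a bookkeeping induction. This convolution bound is the well-known fact that in dimensions $d \geq 5$ the Green's function is ``subcritical'' in the sense that $\bG^{\ast 2}(x) \preceq \bG(x)$, and it is standard in the random-walk and lace-expansion literature. The threshold $d=5$ is sharp: at $d=4$ the sum of $\langle y\rangle^{-2(d-2)} = \langle y\rangle^{-4}$ over $\Z^4$ diverges logarithmically, which is precisely why the tail behaviour changes at the upper-critical dimension and why a genuinely different argument (carried out in the next section) is needed for $d=4$.
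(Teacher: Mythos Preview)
Your proposal is correct and follows essentially the same route as the paper: reduce to injective labellings via \eqref{eq:max_noninjective}, prove $M_k(x) \preceq \lambda^k \langle x\rangle^{-(d-2)}$ by induction using the recursive inequality \cref{lem:recursion}, and close the induction with the convolution estimate $\sum_y \langle y\rangle^{-2(d-2)}\langle x-y\rangle^{-(d-2)} \preceq \langle x\rangle^{-(d-2)}$, proved by the same midway-ball splitting (this is exactly \cref{lem:bubble} in the paper). The only difference is cosmetic bookkeeping of constants: the paper fixes $\lambda = C_1^2 C_2[1\vee\tilde\bG(0,0)^{-1}]$ and proves $M_k(x)\le C_1\lambda^{k-1}\langle x\rangle^{-(d-2)}$ directly, whereas you introduce a free prefactor $A$ chosen small enough to absorb the recursion constant and then enlarge $\lambda$ to cover the base case.
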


We begin with \cref{lem:skeleton_partition}.

\begin{proof}[Proof of \cref{lem:skeleton_partition}]
Since $\mu$ is subexponential it satisfies a bound of the form $\mu(n) \leq C \lambda^n$ for some $C<\infty$ and $\lambda <1$. Thus, we have by a standard generating function calculation \cite[Eq.\ 1.31]{MR2172781} that
\[
b_k \leq C \sum_{n =k}^\infty \binom{n}{k}\lambda^n = \frac{C}{1-\lambda} \left(\frac{\lambda}{1-\lambda}\right)^k
\]
for every $k\geq 0$. 
Since $\sum_{u\in S} c(u) = |V(S)|-1$ for every skeleton $S$, it follows that
\[
\prod_{u\in S} b_{c(u)} \leq \left(\frac{C}{1-\lambda}\right)^{|V(S)|}\left(\frac{\lambda}{1-\lambda}\right)^{|V(S)|-1}
\]
for every skeleton $S$.

Let $\cS_{n,k} \subseteq \cS_k$ be the set of isomorphism classes of $k$-skeletons with exactly $n$ vertices,
and let $\cT_n$ denote the set of isomorphism classes of rooted plane trees with exactly $n$ vertices. It is well known \cite[Example 2.16]{MR2172781} that $|\cT_n|$ is given by the Catalan number
\begin{equation}
\label{eq:Catalan}
|\cT_n| = \frac{1}{n} \binom{2n-2}{n-1}
\leq 4^n. 
\end{equation}
For each rooted plane tree $T \in \cT_n$ there are at most $n^k$ isomorphism classes of $k$-skeletons with underlying rooted tree $T$, so that
$|\cS_{n,k}| \leq 4^n n^k$ for every $n\geq 1$ and $k\geq 0$. 
On the other hand, if $S \in \cS_k$ then every vertex of $V^\circ(S)$ has degree at least three, so that
\begin{equation}
\label{eq:vertex_count}
3|V^\circ(S)| + |\partial V(S)| \leq \sum_{u \in V} \deg(u) =  2|V|-2 = 2|V^\circ(S)| + 2 |\partial V(S)| -2
\end{equation} and hence that $|V(S)| \leq 2k$. 
Putting these observations together, we obtain that
\begin{align*}
\sum_{S \in \cS_k} \prod_{u\in S} b_{c(u)} &\leq \sum_{S \in \cS_k} \left(\frac{C}{1-\lambda}\right)^{|V(S)|}\left(\frac{\lambda}{1-\lambda}\right)^{|V(S)|-1} 
\\&= \sum_{n=1}^{2k} |\cS_{n,k}| \left(\frac{C}{1-\lambda}\right)^{n}\left(\frac{\lambda}{1-\lambda}\right)^{n-1}
\leq \sum_{n=1}^{2k} 4^n n^k \left(\frac{C}{1-\lambda}\right)^{n}\left(\frac{\lambda}{1-\lambda}\right)^{n-1},
\end{align*}
for every $k\geq 0$, from which the claim follows easily.
\end{proof}

\cref{lem:high_d_diagrams} will be proven using the recursive inequality \cref{lem:recursion} together with the following simple fact,
which is related to the fact that the simple random walk \emph{bubble diagram} converges when $d\geq 5$.

\begin{lemma} 
\label{lem:bubble}
Let $d\geq 5$. Then there exists a constant $C=C(d) \geq 1$ such that 
\[
C^{-1}\langle x \rangle^{-d+2} \leq \sum_{y \in \Z^d} \langle y \rangle^{-2d+4}\langle x-y\rangle^{-d+2} \leq C \langle x \rangle^{-d+2}
\]
for every $x \in \Z^d$.
\end{lemma}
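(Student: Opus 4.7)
Write $R = \langle x \rangle \geq 2$. The plan is to obtain the lower bound essentially for free and prove the upper bound by a three-region split followed by dyadic summation.

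For the \emph{lower bound}, simply retain the $y=0$ term of the sum. Since $\langle 0 \rangle = 2$, this term equals $2^{-2d+4} \langle x \rangle^{-d+2}$, and all other summands are nonnegative, so the whole sum is $\succeq \langle x \rangle^{-d+2}$.

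For the \emph{upper bound}, I partition $\Z^d$ into three regions:
\begin{enumerate}[nosep,label=(\Roman*)]
\item $|y| \leq R/2$, where $\langle x-y \rangle \asymp R$;
\item $|x-y| \leq R/2$, where $\langle y \rangle \asymp R$;
\item both $|y| > R/2$ and $|x-y| > R/2$.
\end{enumerate}
In Region (I) I factor out $\langle x-y \rangle^{-d+2} \asymp R^{-d+2}$ and bound the remaining sum $\sum_{y} \langle y \rangle^{-2d+4}$ by a constant, which is legitimate precisely because $2d-4 > d$ when $d\geq 5$. In Region (II) I factor out $\langle y \rangle^{-2d+4} \preceq R^{-2d+4}$ and use the standard local sum $\sum_{|x-y| \leq R/2} \langle x-y \rangle^{-d+2} \asymp R^2$, producing a contribution of $R^{-2d+6}$, which is dominated by $R^{-d+2}$ since $d\geq 4$.

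Region (III) is where I expect the only real work. I handle it by dyadic decomposition: set $A_k = \{y : 2^k R < |y| \leq 2^{k+1} R\}$ for $k \geq -1$. On each $A_k$, $\langle y \rangle^{-2d+4} \asymp (2^k R)^{-2d+4}$, and I bound the partial sum of $\langle x-y\rangle^{-d+2}$ over $A_k$ in two cases:
\begin{itemize}[nosep]
\item If $k \leq 0$, then $|x-y| \leq 2R$, and $\sum_{|x-y|\leq 2R} \langle x-y\rangle^{-d+2} \asymp R^2$;
\item If $k \geq 1$, then $|x-y| \asymp |y|$, so $\langle x-y\rangle^{-d+2} \asymp (2^k R)^{-d+2}$, and the partial sum is $\preceq (2^k R)^d \cdot (2^k R)^{-d+2} = (2^k R)^2$.
\end{itemize}
In both cases the contribution of $A_k$ is $\preceq (2^k R)^{-2d+6}$ (up to the bounded extra factor in the $k \leq 0$ case). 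Summing the resulting geometric series in $k$ (which converges because $-2d+6 < 0$ for $d\geq 4$) gives Region (III) $\preceq R^{-2d+6} \preceq R^{-d+2}$. Combining the three regions yields the upper bound, completing the proof.
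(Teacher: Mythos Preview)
Your proof is correct and follows essentially the same near/far split as the paper: the lower bound from $y=0$, Region~(II) matching the paper's treatment of $A^c=\{d(x,y)<d(0,x)/2\}$, and Regions~(I) and~(III) together making up the paper's set $A=\{d(x,y)\geq d(0,x)/2\}$. The one difference is that your dyadic decomposition in Region~(III) is unnecessary: since $|x-y|>R/2$ there, you already have $\langle x-y\rangle^{-d+2}\preceq R^{-d+2}$, and then $\sum_{y\in(\mathrm{III})}\langle y\rangle^{-2d+4}\leq\sum_{y\in\Z^d}\langle y\rangle^{-2d+4}<\infty$ finishes it exactly as in Region~(I)---this is how the paper handles all of $A$ in one stroke.
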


\begin{proof}[Proof of \cref{lem:bubble}]
  The lower bound is trivial from the contribution of $y=0$.
  For the upper bound, consider the set $A = \{y \in \Z^d : d(x,y) \geq d(0,x)/2\}$.
  We will control the contribution to the sum from $A$ and $A^c$ separately. 
  If $y\in A$ then $\langle x-y \rangle \succeq \langle x \rangle$, so that
  \begin{equation}
    \label{eq:Abound}
    \sum_{y \in A} \langle y \rangle^{-2d+4}\langle x-y\rangle^{-d+2} \preceq \sum_{y \in A} \langle y \rangle^{-2d+4} \langle x \rangle^{-d+2} \preceq \langle x \rangle^{-d+2},
  \end{equation}
  where we used that $\sum_{y \in \Z^d}\langle y \rangle^{-2d+4}$ is finite when $d\geq 5$. On the other hand, if $y \in A^c$ then $d(0,x)/2 \leq d(0,y) \leq 3d(0,x)/2$ and we have that
  \[
    \sum_{y \in A^c} \langle y \rangle^{-2d+4} \langle x-y \rangle^{-d+2} \preceq \langle x \rangle^{-2d+4} \sum_{y \in  B} \langle x -y \rangle^{-d+2}.
  \]
Since there are $O(r^{d-1})$ points $y$ with $\langle x - y \rangle = r$ for each $r\geq 1$, we deduce that
\begin{equation}
\label{eq:Bbound}
\sum_{y \in B} \langle y \rangle^{-2d+4} \langle x -y \rangle^{-d+2} \preceq \langle x \rangle^{-2d+4} \sum_{r=1}^{3\langle x \rangle /2} r \preceq \langle x \rangle^{-2d+6} \preceq \langle x \rangle^{-d+2}
\end{equation}
where we used that $d \geq 4$ in the last inequality. Combining \eqref{eq:Abound} and \eqref{eq:Bbound} completes the proof.
\end{proof}

\begin{proof}[Proof of \cref{lem:high_d_diagrams}]
Let $C_1 \geq 1$ be such that $\tilde \bG(x,y) \leq C_1 \langle x-y \rangle^{-d+2}$ for every $x,y \in \Z^d$, let $C_2 \geq 1$ be the constant from \cref{lem:bubble}, and let $\lambda  = C_1^2 C_2[1 \vee \tilde \bG(0,0)^{-1}]$. We will prove by induction on $k$ that
\begin{equation}
\label{eq:high_d_induction}
M_k(x) \leq C_1 \lambda^{k-1} \langle x \rangle^{-d+2}
\end{equation}
for every $k\geq 1$. The base case $k=1$ is immediate, since $\cS_1'$ has only one element and this element $S$ has $\bD(0,x;S)=\tilde \bG(0,x) \leq C_1 \langle x \rangle^{-2}$. Now suppose that $k\geq 2$ and that the induction hypothesis \eqref{lem:high_d_diagrams} holds for all $1 \leq r \leq k-1$. Applying \cref{lem:recursion} and \cref{lem:bubble} we obtain that
\begin{align*}
M_k(x) &\leq \left[1\vee \tilde \bG(0,0)^{-1}\right] \max\left\{ C_1^3 \lambda^{k-r-1} \lambda^{r-1} \sum_{y \in \Z^d}  \langle y \rangle^{-2d+4} \langle x -y \rangle^{-d+2} : 1 \leq r \leq k-1\right\}
\\&\leq \left[1 \vee \tilde \bG(0,0)^{-1}\right] C_1^3 C_2 \lambda^{k-2}  \langle x \rangle^{-d+2} \leq C_1 \lambda^{k-1} \langle x \rangle^{-d+2}
\end{align*}
for every $x \in \Z^d$. This completes the induction.

The claim follows from \eqref{eq:high_d_induction} and \eqref{eq:max_noninjective}.
\end{proof}

\begin{proof}[Proof of \cref{prop:high_d}]
  We begin with the upper bound. Lemmas \ref{lem:moments_to_diagrams} , \ref{lem:skeleton_partition}, and \ref{lem:high_d_diagrams} imply that there exists a constant $\alpha$ such that 
  $\E_{\mu,0}[L(x)^k] \leq \alpha^k k! \langle x \rangle^{-d+2}$
  for every $k \geq 1$ and $x\in \Z^d$.
We deduce that
  \begin{multline}
    \E_{\mu,0}\left[e^{L(x)/2\alpha} \mathbbm{1}(L(x)>0)\right] \leq \frac{e^{1/2\alpha}}{e^{1/2\alpha}-1}\E_{\mu,0}\left[e^{L(x)/2\alpha} -1\right]
    \\
    = \frac{e^{1/2\alpha}}{e^{1/2\alpha}-1}\sum_{k \geq 1} \frac{1}{2^k \alpha^k k!} \E_{\mu,0}\left[L(x)^k\right] \leq \frac{e^{1/2\alpha}}{e^{1/2\alpha}-1} \langle x \rangle^{-d+2}
  \end{multline}
  for every $x\in \Z^d$, and hence by Markov's inequality that
  \[
    \P_{\mu,0}(L(x)\geq n) \leq \frac{e^{-(n-1)/2\alpha} \langle x \rangle^{-d+2}}{e^{1/2\alpha}-1}
  \]
  for every $x\in \Z^d$ and $n\geq 1$ as claimed.

\medskip

We finish with the lower bound. First suppose that $x=0$.
The probability $q$ that the initial particle has at least one grandchild is positive, and any grandchild has probability $1/(2d)$ of being back at the origin.
By the Markov property, the probability that there are at least $n$ visits to $0$ is at least $(q/2d)^n = e^{-\Theta(n)}$ for every $n\geq 1$.
If $x \neq 0$, then we claim that
\begin{align*}
  \P_{\mu,0}(L(x)\geq n)
  &\geq \P_{\mu,0}(L(x)>0)\P_{\mu,x}(L(x) \geq n) \\
  &= \P_{\mu,0}(L(x)>0)\P_{\mu,0}(L(0) \geq n) \succeq \langle x \rangle^{2-d} e^{-\Theta(n)}
\end{align*}
as required, where the final inequality follows from \eqref{eq:intro_hitting}. 
Indeed, for the first inequality, note that if we explore the genealogical tree $T$ in a breadth-first manner until $x$ is visited for the first time,  the part of the branching process that is descended from this first visit to $x$ has conditional law $\P_{\mu,x}$. This completes the proof.
\end{proof}

\section{The critical dimension}

In this section we deal with the case of the upper critical dimension $d=4$, which is the most technical. We rely on the machinery developed in the previous sections, in particular \cref{lem:skeleton_partition} and \cref{lem:recursion}.
The following is the $d=4$ case of \cref{thm:main_offdiagonal}.

\begin{prop}
\label{prop:4d_tail}
Let $d=4$ and suppose that the offspring distribution $\mu$ is critical, nontrivial, and subexponential. Then
\[
\P_{\mu,0}(L(x)\geq n) \asymp  \exp\bigg[ -\Theta \!\left(\min\left\{\sqrt{n}, \tfrac{n}{\log \langle x \rangle}\right\}\right)\bigg] \langle x \rangle^{-2} \log^{-1}\langle x \rangle
\]
for every $n\geq 1$ and $x\in \Z^d$.
\end{prop}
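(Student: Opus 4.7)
The plan is to prove matching upper and lower bounds on $\P_{\mu,0}(L(x)\geq n)$; the new difficulty at the critical dimension $d=4$ is the appearance of logarithmic corrections arising from the marginal divergence of the simple random walk bubble $\sum_{y \in \Z^4} \tilde\bG(0,y)^2$.

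For the upper bound, the plan is to establish the moment estimate
\[
\E_{\mu,0}\!\left[L(x)^k\right] \preceq \langle x \rangle^{-2} \log^{-1}\langle x \rangle \cdot C^k\, k!\, \bigl(k + \log \langle x \rangle\bigr)^k \qquad \text{for every } k\geq 1 \text{ and } x \in \Z^4,
\]
and then deduce the claimed tail by Markov's inequality optimized over $k$. A direct computation shows that the optimum is attained at $k\asymp \sqrt{n}$ when $\log\langle x\rangle \preceq \sqrt{n}$, yielding the $\exp(-\Theta(\sqrt{n}))$ regime, and at $k\asymp n/\log\langle x\rangle$ otherwise, yielding the $\exp(-\Theta(n/\log\langle x\rangle))$ regime. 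The moment bound is in turn obtained by applying \cref{lem:moments_to_diagrams} and \cref{lem:skeleton_partition} to reduce to a diagram estimate of roughly the form
\[
\bD(0,x,\ldots,x;S) \preceq C^{k} \langle x \rangle^{-2}\, \bigl(|V^\circ(S)| + \log\langle x\rangle\bigr)^{|V^\circ(S)|}
\]
for every $k$-skeleton $S$, together with the combinatorial bound $|V^\circ(S)| \leq k-1$ that follows from \eqref{eq:vertex_count}. The two terms appearing in $|V^\circ(S)| + \log\langle x\rangle$ correspond to two different sources of growth: a factor $\log\langle x\rangle$ per unlabelled vertex from iterated applications of a $d=4$ bubble estimate along the skeleton path toward $x$, and a factorial growth accumulated through the identity $\sum_{r\geq 1} r^{-3}(\log r)^j \asymp j!$ that governs the final integration against $\tilde\bG^3 \asymp \langle y\rangle^{-6}$.

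The main analytic input is the $d=4$ analogue of \cref{lem:bubble},
\[
\sum_{y \in \Z^4} \tilde\bG(a,y)^2 \tilde\bG(y,b) \preceq \langle a-b \rangle^{-2}\, \bigl[1 + \log(2 + |a-b|)\bigr] \qquad \text{for every }a,b \in \Z^4,
\]
proved by splitting the sum according to whether $y$ is close to $a$, close to $b$, or far from both. The diagram bound then follows by induction on $k$ via \cref{lem:recursion}, in the spirit of the proof of \cref{lem:high_d_diagrams} but with a more refined induction hypothesis that tracks jointly the two-point factor $\langle x\rangle^{-2}$, the logarithmic factor, and the skeleton statistic $|V^\circ(S)|$.

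For the lower bound, I would prove a matching moment lower bound by isolating the contribution to the diagrammatic sum of an explicit family of \emph{caterpillar} skeletons, consisting of a chain of $k-1$ unlabelled internal vertices each carrying a labelled pendant leaf mapped to $x$. A direct computation using the same bubble estimate and the integral asymptotic $\sum_{r \geq 1} r^{-3}(\log r)^j \asymp j!$ confirms that the caterpillar family saturates the target moment bound in both the $k \succeq \log\langle x\rangle$ and $k \preceq \log\langle x\rangle$ regimes. The tail lower bound then follows from the $L^p$-Paley--Zygmund inequality \eqref{eq:generalized_Paley_Zygmund}, applied with $p$ chosen to match the optimal $k$ in the Markov step of the upper bound. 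I expect the principal obstacle to lie in the simultaneous factorial/logarithmic bookkeeping required by the inductive proof of the diagram upper bound, and in verifying rigorously that no skeleton has a strictly larger diagrammatic contribution than the caterpillar family in either regime.
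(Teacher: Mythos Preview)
Your upper-bound strategy is essentially the paper's: an induction on $k$ via \cref{lem:recursion}, driven by a four-dimensional bubble estimate, yielding a moment bound that is then converted to a tail bound by Markov/exponential Chebyshev. The paper packages the induction hypothesis as $M_k(x)\le C\lambda^{k-1}\langle x\rangle^{-2}[k-1+\log\langle x\rangle]^{k-1}$ rather than tracking $|V^\circ(S)|$ for each skeleton separately, and uses the more refined bubble lemma
\[
\sum_{y\in\Z^4}\langle x-y\rangle^{-2}\langle y\rangle^{-4}[k+\log\langle y\rangle]^{k}\;\le\;\frac{C\langle x\rangle^{-2}}{k+1}[k+1+\log\langle x\rangle]^{k+1},
\]
but these are reformulations of one another and your plan would go through.

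The lower bound, however, has a genuine gap: \emph{caterpillar skeletons do not saturate the moment bound}. Iterating the bubble estimate along the chain of a caterpillar produces, after $j$ steps, a factor $\langle y\rangle^{-2}[j+\log\langle y\rangle]^{j}/j!$ (this is exactly the $1/(k+1)$ in the displayed inequality above, accumulated). Summing the final vertex and accounting for the $\asymp k!$ labellings therefore gives a contribution of order
\[
k!\cdot\frac{\langle x\rangle^{-2}[k-1+\log\langle x\rangle]^{k-1}}{(k-1)!}\;\asymp\; k\,C^{k}\langle x\rangle^{-2}[k+\log\langle x\rangle]^{k-1},
\]
which is too small by a factor of $(k-1)!$ compared with the target $c^{k}k!\,[k+\log\langle x\rangle]^{k-1}\langle x\rangle^{-2}$. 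Your heuristic ``$\sum_{r}r^{-3}(\log r)^{j}\asymp j!$'' is correct but is precisely cancelled by this accumulated $1/(k-1)!$; it does not produce extra growth. The paper instead uses the \emph{complete binary tree} skeleton of depth $\ell$ with $k=2^{\ell}$ leaves: at each branching one multiplies $[j+\log\langle y\rangle]^{j}\cdot[j+\log\langle y\rangle]^{j}=[j+\log\langle y\rangle]^{2j}$ with no $1/j!$ penalty, and a counting argument for monotone scale assignments along the tree yields $\bD(x,0,\dots,0;S)\ge c^{k}[k+\log\langle x\rangle]^{k-1}\langle x\rangle^{-2}$. Combined with $k!$ labellings this matches the upper bound; general $k$ is handled by log-convexity of moments. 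The key missing idea in your plan is that balanced trees, not paths, are the extremal skeletons in $d=4$.
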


\begin{remark}
  \cref{prop:4d_tail} shows that in four dimensions, unlike in low dimensions, the easiest way for $L(0)$ to be large is \emph{not} for the genealogical tree to be ``large in a typical way''.
  Indeed, $L(0)$ is typically logarithmic in the size of the tree, so for $L(0)$ to be of order $n$ we would need the tree to survive to generation $e^{\Omega(n)}$. This occurs with probability $e^{-\Omega(n)}$, which is much smaller than the probability that $L(0) \geq n$.
\end{remark}

The proof of this proposition relies on the results of Zhu \cite{MR3962482,zhu2017critical} (i.e., the $d=4$ case of the hitting probability estimate \eqref{eq:intro_hitting}) in the case $x \neq 0$, but is self-contained in the case $x=0$. Indeed, the proposition will follow from Zhu's results together with the following proposition.

\begin{prop}
\label{prop:4d_moments}
Let $d=4$ and suppose that the offspring distribution $\mu$ is critical, non-trivial, and subexponential. Then
there exist positive constants $c$ and $C$ such that
\[ c^k k! [k+\log \langle x \rangle]^{k-1}\langle x \rangle^{-2} \leq \E_{\mu,0}[L(x)^k] \leq C^k k! [k+\log \langle x \rangle]^{k-1} \langle x \rangle^{-2}\]
for every  $x\in \Z^d$ and $k\geq 1$.
\end{prop}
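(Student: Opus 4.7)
The plan is to prove the upper and lower bounds on $\E_{\mu,0}[L(x)^k]$ separately, using Lemma \ref{lem:moments_to_diagrams} to convert moments into diagrammatic sums and Lemma \ref{lem:skeleton_partition} (which invokes the sub-exponential hypothesis) to extract the $k!$ factor.

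\textbf{Upper bound.} Combining Lemmas \ref{lem:moments_to_diagrams} and \ref{lem:skeleton_partition} with the non-injective reduction \eqref{eq:max_noninjective}, the upper bound reduces to showing
\[
M_k(x) \leq \lambda^k (k+\log\langle x\rangle)^{k-1} \langle x\rangle^{-2}
\]
for some $\lambda \geq 1$ and all $k\geq 1$, $x\in\Z^4$. The base case $k=1$ is immediate from the heat kernel estimates. For the inductive step I apply Lemma \ref{lem:recursion} and bound the resulting convolution $\sum_y (r+\log\langle y\rangle)^{r-1}(k-r+\log\langle y\rangle)^{k-r-1}\langle y\rangle^{-4}\langle y-x\rangle^{-2}$ by splitting the sum over $|y|\leq |x|/2$, $|y|\sim |x|$, and $|y|\geq 2|x|$. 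The dominant contribution comes from the first region, where the $d=4$ bubble-type identity $\sum_{|y|\leq R}\langle y\rangle^{-4} \asymp \log R$ contributes one additional $\log\langle x\rangle$ factor compared to the convergent bubble underlying Lemma \ref{lem:bubble}. The elementary inequality $L(a+L)^{a-1}(b+L)^{b-1} \leq 2^{a+b}(a+b+L)^{a+b-1}$ (valid for $a,b\geq 1$, $L\geq 0$, provable by separating $L\leq 1$ and $L\geq 1$ and using $a^{a-1}b^{b-1}\leq (a+b)^{a+b-2}$) applied with $a=r$, $b=k-r$, $L=\log\langle x\rangle$ then controls the resulting integral $\int_0^L(r+m)^{r-1}(k-r+m)^{k-r-1}\,dm$ and closes the induction.

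\textbf{Lower bound for $|x|$ large.} Because Lemma \ref{lem:moments_to_diagrams} is an equality of nonnegative terms, restricting to any sub-family of skeletons yields a lower bound. When $\log\langle x\rangle$ is at least a large constant multiple of $k$, I would restrict to injective binary skeletons (root labelled $0$ with one child, every other unlabelled vertex with exactly two children, and $k$ injectively-labelled leaves). Each such skeleton has $\prod_u b_{c(u)} = b_2^{k-1}$ with $b_2>0$ by nontriviality, and there are $k!$ labellings per plane-tree shape. Running an analogous recursion on the summed subtree diagrams $B_k(y) = \sum_{\text{shape}} \bD(\text{shape},\, y\to x)$ gives $B_k(y) \asymp (k-1)\langle y-x\rangle^{-4}(\log\langle y-x\rangle)^{k-2}$, where the coefficient $k-1$ arises because the sequence $c_k = k-1$ solves the convolution recursion $c_k = \tfrac{2c_{k-1}}{k-2} + \sum_{k_1=2}^{k-2}\tfrac{c_{k_1} c_{k-k_1}}{(k_1-1)(k-k_1-1)}$. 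Combining with $\sum_y \tilde\bG(0,y) B_k(y)$ and multiplying by $k!$ recovers $c^k k!(\log\langle x\rangle)^{k-1}\langle x\rangle^{-2}$, matching the claimed lower bound in this regime.

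\textbf{Lower bound for $|x|$ small; main obstacle.} For the complementary regime $\log\langle x\rangle \ll k$ the bound to prove becomes essentially $c^k k! k^{k-1}$, and the binary-skeleton construction above falls short. I would first use the first-passage decomposition $\E_{\mu,0}[L(x)^k] \geq \P_{\mu,0}(L(x)>0)\,\E_{\mu,x}[L(x)^k] = \P_{\mu,0}(L(x)>0)\,\E_{\mu,0}[L(0)^k]$, valid because the descendants of the first particle to visit $x$ form an independent BRW started at $x$, to reduce the problem to the origin. Establishing $\E_{\mu,0}[L(0)^k] \succeq c^k k! k^{k-1}$ then requires including skeletons with additional labelled vertices sitting at the origin: each such edge contributes only the positive constant $\tilde\bG(0,0)$ without decay, and the extra $k^{k-1}$ factor should emerge from a weighted enumeration of rooted plane trees on $k+1$ labelled vertices (incorporating chains, stars, and mixtures), reflecting the stretched-exponential tail $\P_{\mu,0}(L(0)\geq n) \asymp e^{-\Theta(\sqrt{n})}$ that \cref{prop:4d_tail} will ultimately extract from this moment estimate. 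I expect controlling this origin lower bound with the correct $k^{k-1}$ growth, and patching it smoothly onto the $|x|$-large construction via the first-passage reduction, to be the main technical obstacle.
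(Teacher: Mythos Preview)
Your upper bound is essentially the paper's argument: induct on $M_k(x)$ via \cref{lem:recursion}, bound $(r-1+\log\langle y\rangle)^{r-1}(k-r-1+\log\langle y\rangle)^{k-r-1}\le (k-2+\log\langle y\rangle)^{k-2}$, and feed the result into the four-dimensional bubble estimate (\cref{lem:4dbubble}), whose proof is exactly the three-region decomposition you describe.

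The lower bound is where your plan diverges, and the split into ``$|x|$ large'' and ``$|x|$ small'' is a symptom of a missing idea rather than a genuine case distinction. Your recursion on binary-tree diagrams only extracts $(\log\langle x\rangle)^{k-1}$ because you implicitly restrict internal vertices to scales $\le\langle x\rangle$. The paper's observation is that the \emph{same} binary skeleton already gives the full $[k\vee\log\langle x\rangle]^{k-1}$: in the diagram $\bD(x,0,\dots,0;S_\ell)$ one places each internal vertex $v$ at dyadic scale $2^{\phi(v)}$ from the origin with $\phi(v)\in\{0,\dots,k\vee\lceil\log_2\langle x\rangle\rceil\}$, allowing scales far beyond $\langle x\rangle$. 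Each such placement contributes a bounded factor (the $2^{4\phi(v)}$ volume cancels the $2^{-4\phi(\sigma(v))}$ Green's-function decay because $\phi$ is decreasing along branches), so the diagram is bounded below by $c^k\langle x\rangle^{-2}|\Phi|$ where $\Phi$ is the set of decreasing functions on the $k-1$ internal vertices of a binary tree with values in $\{0,\dots,k\vee\lceil\log_2\langle x\rangle\rceil\}$. Counting these gives $|\Phi|\ge c^k[k\vee\log\langle x\rangle]^{k-1}$, which is the full bound---including the $k^{k-1}$ at the origin---without any separate construction. The paper then interpolates from $k=2^\ell$ to general $k$ by log-convexity of moments, using the already-established upper bound.

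So your ``main obstacle'' dissolves once you allow internal vertices at scales up to $2^k$; the first-passage reduction and the idea of stacking labelled vertices at the origin are not needed.
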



We begin with the following lemma, which is the four-dimensional analogue of \cref{lem:bubble}.

\begin{lemma} 
\label{lem:4dbubble}
Let $d=4$. Then there exists a positive constant $C$ such that 
\[
  \sum_{y \in \Z^d} \langle x-y\rangle^{-2} \langle y \rangle^{-4}  [k+\log \langle y \rangle]^k \leq \frac{C\langle x \rangle^{-2}}{k+1} [k+1+ \log\langle x \rangle ]^{k+1}
\]
for every $x \in \Z^d$ and $k\geq 0$.
\end{lemma}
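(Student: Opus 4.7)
The plan is to follow the decomposition used in \cref{lem:bubble}, but with more care because at $d=4$ the weight $\langle y\rangle^{-4}$ is borderline summable and acquires a logarithmic factor. Setting $R = \langle x\rangle$, I would partition $\Z^4 = B_1 \cup B_2 \cup B_3$ where
\[
B_1 = \{y : \langle y\rangle \leq R/2\}, \quad B_2 = \{y : R/2 < \langle y\rangle < 2R\}, \quad B_3 = \{y : \langle y\rangle \geq 2R\},
\]
and estimate each piece separately, using that $\langle x-y\rangle \asymp R$ on $B_1$, $\langle y\rangle \asymp R$ on $B_2$, and $\langle x-y\rangle \asymp \langle y\rangle$ on $B_3$.

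The $B_1$ piece will produce the dominant term. Since $\#\{y : \langle y\rangle = r\} = O(r^3)$ in $\Z^4$, its contribution is bounded by $R^{-2}\sum_{r=1}^{R/2} r^{-1}(k+\log r)^k$. The function $u \mapsto u^{-1}(k+\log u)^k$ is decreasing for $u \geq 1$ (immediate from differentiating), so the sum is comparable to $\int_1^{R/2} u^{-1}(k+\log u)^k\,du$, and the substitution $v = k+\log u$ reduces this to $\int_k^{k+\log(R/2)} v^k\,dv$, which evaluates to exactly the desired leading factor $\frac{1}{k+1}(k+\log R)^{k+1}$ (up to lower order terms). The $B_2$ piece is controlled by $R^{-4}(k+\log R)^k \sum_{y \in B_2}\langle x-y\rangle^{-2}$; using that $\sum_{|z|\leq 3R}|z|^{-2} = O(R^2)$ in $d=4$ this yields $R^{-2}(k+\log R)^k$, which is absorbed into the target bound via the elementary inequality $(k+1)(k+\log R)^k \leq (k+1+\log R)^{k+1}$.

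The main technical obstacle will be the far region $B_3$, where one cannot simply invoke a \cref{lem:bubble}-style argument: the auxiliary sum $\sum_{y}\langle y\rangle^{-4}(k+\log\langle y\rangle)^k$ is genuinely divergent at $d=4$, so one must exploit the extra decay provided by $\langle x-y\rangle^{-2}$. The plan is to bound the $B_3$ contribution by $\sum_{r\geq 2R} r^{-3}(k+\log r)^k$ and estimate the corresponding integral $\int_{2R}^\infty u^{-3}(k+\log u)^k\,du$ by the same substitution $v=k+\log u$, which transforms it (after rescaling) into an upper incomplete gamma integral of the form $c^k \int_a^\infty w^k e^{-w}\,dw$ with $a = 2(k+\log 2R) \geq 2k$. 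A standard integration-by-parts argument then yields $\int_a^\infty w^k e^{-w}\,dw \leq 2 a^k e^{-a}$, giving a $B_3$ contribution of order $R^{-2}(k+\log R)^k$, again absorbed into the $B_1$ bound. Summing the three contributions and tidying constants completes the proof.
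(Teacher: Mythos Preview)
Your proposal is correct and follows essentially the same three-region decomposition as the paper, with the same integral calculation for the dominant near-origin region and the same $R^{-2}(k+\log R)^k$ bound for the intermediate region. The only notable difference is in the far region: the paper uses a dyadic decomposition and observes that the ratio of consecutive dyadic contributions is at most $e/4<1$ (giving a geometric series), whereas you reduce to an incomplete gamma tail $\int_a^\infty w^k e^{-w}\,dw$ with $a\geq 2k$ and bound it by $2a^k e^{-a}$; both arguments are standard and yield the same estimate.
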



\begin{proof}[Proof of \cref{lem:4dbubble}]
  Partition $\Z^3$ into three sets $A,B,C$ according to the distance to $0$ and $x$:
  \begin{align*}
    A &= \{ y \in \Z^d : d(0,y) \leq 2 d(0,x) \text{ and } d(x,y) \geq d(0,x)/2 \}, \\
    B &= \{ y \in \Z^d : d(x,y) < d(0,x)/2\}, \\
    C &= \{y \in \Z^d : d(0,y) > 2 d(0,x)\}.
  \end{align*}
  We will control the contribution to the sum of each of these three sets separately. 
If $y\in A$ then $ \langle x \rangle/2 \leq \langle x -y \rangle \leq 3\langle x \rangle$, so that
\begin{align}
\sum_{y \in A} \langle x-y\rangle^{-2} \langle y \rangle^{-4} [k+\log \langle y \rangle]^k 
&\asymp \langle x \rangle^{-2} \sum_{y \in A}  \langle y \rangle^{-4} [k+\log \langle y \rangle]^k
 \asymp \langle x \rangle^{-2} \sum_{r=1}^{2 \langle x \rangle} r^{-1} [k+\log r]^k.  \nonumber
\end{align}
The sum on the right hand side can be bounded with a little calculus: We have the integral identity
\[\int_1^s t^{-1}(k+ \log t)^k \dif t = \frac{(k+\log s)^{k+1}}{k+1} - \frac{k^{k+1}}{k+1}\]
for every $s \geq 1$,  
and since the function $t^{-1}(k+\log t)^k$ is decreasing when $t\geq 1$ (as can be seen by computing the derivative to be $-t^{-2} (k+\log t)^{k-1} \log t$), we have that
\begin{multline*}\sum_{r=1}^{2\langle x \rangle} r^{-1}[k+\log r]^k  \leq k^k + \int_1^{2\langle x  \rangle} t^{-1}[k+\log t]^k \dif t =  \frac{[k+\log 2\langle x \rangle]^{k+1}}{k+1} + \frac{k^k}{k+1}\\ \leq \frac{2}{k+1} [k+1+\log\langle x \rangle]^{k+1}
\end{multline*}
and hence that
\begin{equation}
\sum_{y \in A} \langle x-y\rangle^{-2} \langle y \rangle^{-4} [k+\log \langle y \rangle]^k \preceq \frac{\langle x \rangle^{-2}}{k+1}[k+1+\log \langle x \rangle]^{k+1}
\end{equation}
as required.

It remains to upper bound the contributions from $B$ and $C$.
If $y \in B$ then $d(0,x)/2 \leq d(0,y) \leq 2d(0,x)$ and we have that 
\begin{align}
\label{eq:Bbound4d}
\sum_{y \in B}  \langle x -y \rangle^{-2} \langle y \rangle^{-4} [k+\log \langle y \rangle]^k 
&\asymp \langle x \rangle^{-4} [k+\log 2\langle x \rangle]^k \sum_{y \in  B} \langle x -y \rangle^{-2}
\nonumber\\
& \asymp \langle x \rangle^{-4} [k+1+\log \langle x \rangle]^k  \sum_{r=1}^{2\langle x \rangle } r 
\\
&\asymp \langle x \rangle^{-2}[k+1+\log \langle x \rangle]^k
\leq  \frac{\langle x \rangle^{-2}}{k+1}[k+1+\log \langle x \rangle]^{k+1}
\end{align}
as required.

Finally, if $y \in C$ then $d(x,y) \geq 
d(0,y)-d(0,x) \geq d(0,y)/2$ and $d(0,y) > d(0,x)$, so that
\begin{equation}
\label{eq:C_polar}
\sum_{y \in C} \langle x-y \rangle^{-2} \langle y \rangle^{-2} [k+\log \langle y \rangle]^k
\asymp \sum_{y \in C}  \langle y \rangle^{-6} [k+\log \langle y \rangle]^k
\end{equation}
Up to constants, there are $2^{4n}$ choices for $y$ with $2^n \le \langle y \rangle < 2^{n+1}$. For each such $y$ we have $\langle y \rangle^{-6} [k+\log \langle y \rangle]^k \asymp 2^{-6n} (k+n\log 2)^k$,  so the total contribution from all such $y$'s is (up to constants) $2^{-2n} (k+n\log 2)^k$.
Thus
\begin{equation}
\label{eq:first_term}
  \sum_{y \in C} \langle x-y \rangle^{-2} \langle y \rangle^{-2} [k+\log \langle y \rangle]^k
  \asymp \sum_{2^n > \langle x \rangle} 2^{-2n} (k+n\log 2)^k.
\end{equation}
The ratio of consecutive terms in this sum is
\[
  \frac{2^{-2(n+1)} (k+(n+1)\log 2)^k}{2^{-2n} (k+n\log 2)^k}
  \le \frac14 \left(1+\frac1{k+n\log 2}\right)^k \leq \frac{e}{4}.
\]
Since $e/4 < 1$, it follows that the sum on the right of \eqref{eq:first_term} is of the same order as its first term, and we deduce that
\begin{equation}
  \label{eq:Cbound}
  \sum_{y \in C} \langle x-y \rangle^{-2} \langle y \rangle^{-4} [k+\log \langle y \rangle]^k
  \asymp 
 \langle x \rangle^{-2} [k+ \log 2\langle x \rangle]^k
\preceq
  \frac{\langle x \rangle^{-2}}{k+1} [k+1+\log \langle x \rangle]^{k+1}.
\end{equation}
This is also of the required order, completing the proof.
\end{proof}

\begin{proof}[Proof of \cref{prop:4d_moments}]
  We begin with the upper bound.
  Let $C_1\geq 1$ be a constant such that \mbox{$\tilde \bG(0,x) \leq C_1 \langle x \rangle^{-2}$} for every $x\in \Z^4$, let $C_2 \geq 1$ be the constant from \cref{lem:4dbubble}, and let $\lambda = C_1^2 C_2 [1 \vee \tilde \bG(0,0)^{-1}]$. We prove by induction on $k$ that
\begin{equation}
M_k(x) \leq C_1 \lambda^{k-1} \langle x \rangle^{-2} [k-1+\log \langle x \rangle]^{k-1}
\label{eq:4dMbound}
\end{equation}
for every $k\geq 1$ and $x\in \Z^4$. The base case $k=1$ is trivial. For $k\geq 2$, we may apply \cref{lem:recursion} and the induction hypothesis to obtain that
\begin{multline*}
M_k(x) \leq [1 \vee \tilde \bG(0,0)^{-1}]C_1^3 \lambda^{k-2} 
\\
\cdot\max \Biggl\{
\sum_{y \in \Z^4} \langle y \rangle^{-4}\langle x -y \rangle^{-2}[k-r-1+\log \langle y \rangle]^{k-r-1}[r-1+\log \langle y \rangle]^{r-1}
: 1 \leq r \leq k-1 \Biggr\} 
\end{multline*}
and hence that
\begin{multline*}
M_k(x) \leq [1 \vee \tilde \bG(0,0)^{-1}]C_1^3 \lambda^{k-2}
\sum_{y \in \Z^4} \langle y \rangle^{-4}\langle x -y \rangle^{-2}[k-2+\log \langle y \rangle]^{k-2}
\\\leq C_1 \lambda^{k-1} \langle x \rangle^{-2}[k-1+\log \langle x \rangle]^{k-1}
\end{multline*}
as desired, where we applied \cref{lem:4dbubble} in the second line. As in the proof of \cref{prop:high_d}, it follows from \eqref{eq:4dMbound}, \cref{lem:moments_to_diagrams}, and \cref{lem:skeleton_partition} that there exists a constant $C_3$ such that
\begin{equation}
\label{eq:4d_main_upper}
\E_{\mu,0}[L(x)^k] \leq C_3^k k! [k-1+\log \langle x \rangle]^{k-1} \langle x \rangle^{-2}
\end{equation}
for every $x\in \Z^d$ and $k\geq 1$ as claimed.

\medskip

We now turn to the lower bound. We first prove the bound for $k$ of the form $2^\ell$ for some natural number $\ell \geq 1$. For each $\ell \geq 0$, let $k=2^\ell$ and let $T=T_\ell$ be the rooted plane tree with boundary in which the root has degree $1$, the descendants of the root's child form a complete binary tree of height $\ell$, and $\partial V(T_\ell)$ is equal to the set of leaves of $T$.
Let $\rho$ be the root of $T_\ell$, let $v_0$ be the child of the root, and for each vertex $v$ of $T_\ell$ other than $\rho$, let $\sigma(v)$ denote the parent of $v$ in $T$.
There are $k!$ ways to label the non-root leaves of $T$ with the labels $\{1,\ldots,k\}$, and each such labelling yields a distinct $k$-skeleton. Let $S=S_\ell$ be one such labelled $k$-skeleton. Applying \cref{lem:moments_to_diagrams}, we have by symmetry that
\begin{equation}
\E_{\mu,0}[L(x)^k] = \E_{\mu,x}[L(0)^k]\geq k! (b_2)^{k-1} \bD(x,0,\ldots,0;S).
\label{eq:4dpartition}
\end{equation}
(Recall that $b_2$ is the second descending moment of the offspring distribution, which is positive since $\mu$ is critical and nontrivial.)

Consider the set $\Phi$ of functions $\phi : V^\circ(T)  \to \{0,1,\ldots,k \vee \lceil \log_2 \langle x \rangle \rceil\}$ that are decreasing along each branch of the tree, i.e. such that
$\phi(\sigma(v))\geq \phi(v)$ for each $v\in V^\circ(T) \setminus \{v_0\}$. For each $\phi\in \Phi$, we define $\Lambda(\phi)$ to be the set of functions $f: V^\circ(T) \to \Z^4$ such that $2^{\phi(v)} \leq d(0, f(v)) < 2^{\phi(v)+1}$ for every $v\in V^\circ(T)$. Note that the sets $\Lambda(\phi)$ and $\Lambda(\psi)$ are disjoint whenever $\phi,\psi \in \Phi$ are distinct. Moreover, if $\phi \in \Phi$ and $f\in \Lambda(\phi)$ then
 $d(f(v),f(\sigma(v)) \leq 2 \max d(0,f(v)),d(0,f(\sigma(v))$ so that
 $\langle f(v)-f(\sigma(v)) \rangle \preceq \langle f(\sigma(v)) \rangle \asymp 2^{\phi(\sigma(v))}$ for every $v\in V^\circ \setminus \{v_0\}$. Similarly, we necessarily have that $\langle x - f(v_0) \rangle \preceq 2^{k \vee \log_2 \langle x \rangle} \leq 2^k \langle x \rangle$. Thus, we obtain from the definitions that there exists a positive constant $c_1$ such that
\begin{align*}
\bD(x,0,\ldots,0;S) 
& \geq c_1^k \langle x \rangle^{-2} \sum_{\phi \in \Phi} |\Lambda(\phi)|  \prod_{v\in V^\circ(T)}2^{-4\phi(v)}.
\end{align*}
Next observe that there exists a positive constant $c_2$ such that 
\[|\Lambda(\phi)| = \prod_{v\in V^\circ(T)} |\{y \in \Z^4 : 2^{\phi(v)} \leq d(0,y) < 2^{\phi(v)+1}\}| \geq c_2^k \prod_{v\in V^\circ(T)} 2^{4 \phi(v)},\]
so that there exists a positive constant $c_3$ such that
\[
\bD(x,0,\ldots,0;S) \geq 
 c_3^k \langle x \rangle^{-2} |\Phi|.
\]

It remains to estimate $|\Phi|$. 
Let $E_i$ be the set of edges of $T$ connecting vertices at distance $i$ from the root to the children of these vertices, so that $|E_i|=2^i$ for $0 \leq i \leq \ell$, and let $E' = \bigcup_{i=0}^{\ell-1} E_i$.  
Let $\Psi$ be the set of functions $\psi: E'  \to \{0,\ldots,k \vee \lceil \log_2 \langle x \rangle \rceil \}$ such that if $e\in E_i$ then $\psi(e) \leq 2^{i-\ell}(k \vee \lceil \log_2 \langle x \rangle\rceil )$. We clearly have that 
\[
|\Psi| = \prod_{m=1}^{\ell} \left\lfloor \frac{k \vee \lceil \log_2 \langle x \rangle \rceil  }{2^m}+1\right\rfloor^{2^{\ell-m}} \geq \prod_{m=1}^{\ell} \left[ \frac{k \vee \log_2 \langle x \rangle }{2^m}\right]^{2^{\ell-m}} = [k  \vee \log_2 \langle x \rangle]^{2^{\ell}-1} 2^{-\sum_{m=1}^{\ell} m2^{\ell-m}}
\]
We claim that there is an injection $\Psi \to \Phi$. 
Given $\psi \in \Psi$, let $\phi \in \Phi$ be defined recursively by $\phi(v_0)=k \vee \lceil \log_2 \langle x \rangle \rceil$ and $\phi(v) = \phi(\sigma(v))-\psi(\{v,\sigma(v)\})$ for every $v\in V^\circ(T) \setminus \{v_0\}$. The function $\phi$ is indeed an element of $\Phi$, since $\phi(v) \geq k \vee \log_2 \langle x \rangle - \sum_{i=1}^{\ell-1} 2^{i-\ell} (k \vee \log_2 \langle x \rangle) \geq 0$ for every $v\in V^\circ(T)$. Moreover, distinct elements of $\Psi$ clearly lead to distinct elements of $\Phi$ under this assignment, as claimed. We deduce that
\[
|\Phi| \geq |\Psi| \geq [k  \vee \log_2 \langle x \rangle]^{k-1} 2^{-\sum_{m=1}^{\ell-1} m2^{\ell-m}}
\geq c_4^k [k  \vee \log_2 \langle x \rangle]^{k-1}
\]
where $c_4=2^{-\sum_{m=1}^\infty m 2^{-m}}>0$. It follows that there exists a constant $c_5>0$ such that 
\begin{equation}
\bD(x,0,\ldots,0;S_\ell) \geq c_5^k [k + \log_2 \langle x \rangle]^{k-1} \langle x \rangle^{-2}
\label{eq:4dbinarydiagram}
\end{equation}
for every $k=2^\ell \geq 2$ and $x \in \Z^4$. Putting together \eqref{eq:4dpartition} and \eqref{eq:4dbinarydiagram}, we obtain that there exists a constant $c_6>0$ such that
\begin{equation}
\E_{\mu,0}[L(x)^k] \geq c_6^k k! \left[ k + \log \langle x \rangle\right]^{k-1} \langle x \rangle^{-2}
\label{eq:4d_dyadic}
\end{equation}
for every $x\in \Z^4$ and every $k=2^{\ell}$ for some $\ell\geq 1$. 

To get the lower bound for $k$ which is not a power of $2$, we interpolate using log-convexity.
By Cauchy-Schwarz, for any random variable $X\geq0$ and any $a\geq i\geq0$ we have
$\left(\E X^a\right)^2 \leq \left(\E X^{a-i}\right)\left(\E X^{a+i}\right)$, so that the moments $\E X^n$ are a log-convex sequence.
Since we have the claimed upper bound for every $k$ and the lower bound for powers of $2$, the lower bound follows for all $k$.
More precisely, let $a\in[k,2k]$ be a power of $2$, and let $b=2a-k$.
Log-convexity gives
\[
  \E_{\mu,0}\left[L(x)^k\right]
  \geq \frac{\E_{\mu,0}[L(x)^a]^2}{\E_{\mu,0}[L(x)^{b}]}.
\]
Applying \eqref{eq:4d_dyadic} to control the numerator and \eqref{eq:4d_main_upper} to control the denominator 
yields the lower bound for arbitrary $k$.
\end{proof}

\begin{proof}[Proof of \cref{prop:4d_tail}]
By Zhu's Theorem, it suffices to prove that
\[
\P_{\mu,0}(L(x) \geq n  \mid L(x)>0) = \exp\left[-\Theta\left(\min\left\{\sqrt{n}, \frac{n}{\log \langle x \rangle}\right\}\right)\right]
\]
for every $x\in \Z^4$ and $n \geq 1$. Moreover, Zhu's Theorem and \cref{prop:4d_moments} imply that there exist positive constants $c_1$ and $C_2$ such that
\[
c^k_1 e^{k \log k} [k \vee \log \langle x \rangle]^{k-1} \log \langle x \rangle \leq \E_{\mu,0}\left[L(x)^k \big| L(x)>0 \right] \leq C^k_1 e^{k\log k} [k \vee \log \langle x \rangle]^{k-1} \log \langle x \rangle
\]
for every $x\in \Z^4$ and $k\geq 1$, and hence that there exist positive constants $c_2$ and $C_2\geq 1$ such that
\begin{equation}
c_2^k e^{k \log k} [k \vee \log \langle x \rangle]^{k} \leq \E_{\mu,0}\left[L(x)^k \mid L(x)>0 \right] \leq C_2^k e^{k\log k} [k \vee \log \langle x \rangle]^{k}
\label{eq:4d_nearly_there}
\end{equation}
for every $x\in \Z^4$ and $k\geq 1$.

For the upper bound, we apply \eqref{eq:4d_nearly_there} and Stirling's approximation to obtain that there exists a constant $C_3$ such that
\begin{align*}\E_{\mu,0}\left[\exp\left(\frac{1}{2eC_2} \min\left\{ \frac{L(x)}{\log \langle x \rangle}, \sqrt{L(x)} \right\}\right) \mid L(x)>0\right] \hspace{-6cm}&\\
&= \sum_{k=0}^\infty \frac{(2eC_2)^{-k}}{k!} \E_{\mu,0} \left[\min\left\{\frac{L(x)}{\log \langle x \rangle}, \sqrt{L(x)} \right\}^k\mid L(x)>0\right]\\
&\leq 
\sum_{k=0}^\infty \frac{(2eC_2)^{-k}}{k!} \min\left\{\E_{\mu,0} \left[\frac{L(x)^k}{\log^k \langle x \rangle} \mid L(x)>0 \right], \E_{\mu,0}\left[L(x)^{k/2} \mid L(x)>0\right]\right\}
\\
&\leq 
\sum_{k=0}^{\lfloor \log \langle x \rangle\rfloor} \frac{(2e)^{-k}}{k!} e^{k\log k} + \sum_{k=1+\lfloor\log\langle x \rangle\rfloor}^\infty \frac{e^{-k}(2C_2)^{-k/2}}{k!} e^{k \log k} \leq C_3.
 \end{align*}
Thus, it follows by Markov's inequality that there exists a constant $C_3$ such that
\[
\P\left(L(x)\geq n \mid L(x)>0\right) \leq C_3\exp\left(-\frac{1}{2eC_2} \min\left\{ \frac{n}{\log \langle x \rangle}, \sqrt{n} \right\}\right)
\]
for every $x\in \Z^4$ and $n\geq 1$ as required.

For the lower bound, we apply the Paley-Zygmund inequality to obtain that there exists a positive constant $c_3$ such that
\begin{align*}
\P_{\mu,0}\left(L(x)^k \geq \frac{1}{2}c_2^k e^{k \log k}[k \vee \log \langle x \rangle]^k \right) 
&\geq 
\P_{\mu,0}\left(L(x)^k \geq \frac{1}{2}\E_{\mu,0}\left[L(x)^k \mid L(x)>0\right] \right) \\
&\geq \frac{1}{4} \E_{\mu,0}\left[L(x)^{2k}\right]^{-1}\E_{\mu,0}\left[L(x)^k\right]^2
\\
&\geq \frac{1}{4} 
\frac{c_2^{2k} e^{2 k \log k}[k \vee \log \langle x \rangle]^{2k}}{C_2^{2k} e^{2k \log 2k}[2 k \vee \log \langle x \rangle]^{2k}} \geq c_3^k 
\end{align*}
for every $k \geq 1$ and $x\in \Z^4$, and hence that there exists a positive constant $c_4$ such that
\[
\P_{\mu,0}\left(L(x) \geq c_4 k [k \vee \log \langle x \rangle] \right) \geq c_3^k
\]
for every $k\geq 1$ and $x\in \Z^4$. The claimed lower bound follows from this inequality by taking $k = \min\left\{ \left\lceil \sqrt{n/c_4} \right\rceil, \left\lceil n/(c_4 \log \langle x \rangle)\right\rceil \right\}$.
\end{proof}

\section*{Acknowledgments} 
The authors are grateful to the organizers of the Oberwolfach Workshop 
\emph{Strongly Correlated Interacting Processes}, where this work was initiated.
We thank Ed Perkins and Jean-Fran\c{c}ois Le Gall for helpful discussions on the literature.
OA is supported in part by an NSERC discovery grant.

\footnotesize{
  \bibliographystyle{abbrv}
  \bibliography{unimodularthesis}
}
\end{document}